%%%%%%%%%%%%%%%%%%%%%%%%%%%%%%%%%%%%%%%%%%%%%%
%																                  		   %	
%   F. FORSTNERIC																   %
%																                                   %
%																                       		   %
%   Mergelyan's and Arakelyan's theorems for manifold-valued maps 						           %	
%																                       		   %
%   AMS-LaTeX 1.2 file for journals, based on art.cls.	                     		                                           %
%																			           %
%																                       		   %
%																	                           %
%%%%%%%%%%%%%%%%%%%%%%%%%%%%%%%%%%%%%%%%%%%%%%

\documentclass[11pt]{amsart}

\usepackage[a4paper,hmargin=3.5cm,vmargin=4cm]{geometry}
\usepackage{amsfonts,amssymb,amscd,amstext,verbatim}

\usepackage{inputenc}
\usepackage{hyperref}
\usepackage{verbatim}

% \usepackage{graphicx}
% \usepackage[dvips]{epsfig}
%\usepackage{color}

%% TO MODIFY THE HEADINGS

\usepackage{fancyhdr}
\pagestyle{fancy}
\fancyhf{}

\usepackage{pstricks,pstcol,pst-plot}

\definecolor{Black}{cmyk}{0,0,0,1}
\definecolor{OrangeRed}{cmyk}{0,0.6,1,0}            % half magenta only, full yellow
\definecolor{DarkBlue}{cmyk}{1,1,0,0.20}
\definecolor{myblue}{rgb}{0.66,0.78,1.00}
\definecolor{Violet}{cmyk}{0.79,0.88,0,0}
\definecolor{Lavender}{cmyk}{0,0.48,0,0}

%% EDITING THE HEADINGS
%\renewcommand{\headrulewidth}{0pt}

%% Type of letter
%\usepackage{palatino}

\usepackage{times}
\usepackage{enumerate}
\usepackage{titlesec}
\usepackage{mathrsfs}

%% To do not cut the words
\pretolerance=2000
\tolerance=3000

% Margins

\headheight=13.03pt
\headsep 0.5cm
\topmargin 0.5cm
\textheight = 49\baselineskip
\textwidth 14cm
\oddsidemargin 1cm
\evensidemargin 1cm

\setlength{\parskip}{0.5em}

%% SECTIONS
\titleformat{\section}%[display]
{\filcenter\bfseries\large} {\thesection{.}}{0.2cm}{}%[$\vspace*{-1.0cm}$]
%%%%%%%%%%%%%%%%%%%%%%%%%%%%%%%%%%%%%%%%%%%%%%%%%%%%%%
%% SUBSECTIONS
\titleformat{\subsection}[runin]
{\bfseries} {\thesubsection{.}}{0.15cm}{}[.]
%%%%%%%%%%%%%%%%%%%%%%%%%%%%%%%%%%%%%%%%%%%%%%%%%%%%%%
%% SUBSUBSECTIONS
\titleformat{\subsubsection}[runin]
{\em}{\thesubsubsection{.}}{0.15cm}{}[.]
%%%%%%%%%%%%%%%%%%%%%%%%%%%%%%%%%%%%%%%%%%%%%%%%%%%%%%

%% Caption of figures
\usepackage[up,bf]{caption}
%\setlength{\captionmargin}{20pt}

%%%%%%%%%%
%%%%%%%%%%
%%%%%%%%%%
%%%%%%%%%%
%%%%%%%%%%
%%%%%%%%%%

\newtheorem{theorem}{Theorem}[section]
\newtheorem{proposition}[theorem]{Proposition}

\newtheorem{lemma}[theorem]{Lemma}
\newtheorem{corollary}[theorem]{Corollary}

\theoremstyle{definition}
\newtheorem{definition}[theorem]{Definition}
\newtheorem{remark}[theorem]{Remark}

\numberwithin{equation}{section}
\numberwithin{figure}{section}

%%%%%%%%%%
%%%%%%%%%%
%%%%%%%%%%
%%%%%%%%%%
%%%%%%%%%%     CALIGRAPHIC CAPITAL
%%%%%%%%%%
%%%%%%%%%%
%%%%%%%%%%

\newcommand\Acal{\mathcal{A}}
\newcommand\Bcal{\mathcal{B}}

\newcommand\Ecal{\mathcal{E}}

\newcommand\Gcal{\mathcal{G}}
\newcommand\Hcal{\mathcal{H}}

\newcommand\Ocal{\mathcal{O}}

%%%%%%%%%%
%%%%%%%%%%
%%%%%%%%%%     SMALL BOLDFACE
%%%%%%%%%%
%%%%%%%%%%
%%%%%%%%%%

%%%%%%%%%%
%%%%%%%%%%
%%%%%%%%%%     MATH SCRIPT
%%%%%%%%%%
%%%%%%%%%%
%%%%%%%%%%

\newcommand\Ascr{\mathscr{A}}

\newcommand\Cscr{\mathscr{C}}

\newcommand\Rscr{\mathscr{R}}

%%%%%%%%%%
%%%%%%%%%%
%%%%%%%%%%     MATH BLACKBOARD
%%%%%%%%%%
%%%%%%%%%%
%%%%%%%%%%

\newcommand\B{\mathbb{B}}

\newcommand\CP{\mathbb{CP}}

\newcommand\N{\mathbb{N}}

\newcommand\R{\mathbb{R}}

\newcommand\Z{\mathbb{Z}}

%%%%%%%%%%
%%%%%%%%%%
%%%%%%%%%%     FRAKTUR
%%%%%%%%%%
%%%%%%%%%%
%%%%%%%%%%

\newcommand\ggot{\mathfrak{g}}

\newcommand\igot{\mathfrak{i}}

\renewcommand\igot{\mathfrak{i}}

%
%  typewriter
%

%
%  e, i, zero -  mathmode
%

\renewcommand\imath{\igot}

%
%  arrows
%

\newcommand\lra{\longrightarrow}

%
%  tilde, widehat, di, dibar
%

\newcommand\wh{\widehat}
\newcommand\di{\partial}
\newcommand\dibar{\overline\partial}

%
%  abbreviations
%

\newcommand\dist{\mathrm{dist}}

\newcommand\supp{\mathrm{supp}}

\newcommand\Aut{\mathrm{Aut}}

\newcommand\Id{\mathrm{Id}}

\def\dist{\mathrm{dist}}

\newcommand\Ocalc{\overline{\mathcal{O}}}
\newcommand\Ocalcl{\Ocalc_{\mathrm{loc}}}

%%%%%%%%%%
%%%%%%%%%%
%%%%%%%%%%
%%%%%%%%%%
%%%%%%%%%%
%%%%%%%%%%

\begin{document}

\fancyhead[LO]{Mergelyan's and Arakelian's theorems for manifold-valued maps}
\fancyhead[RE]{F.\ Forstneri\v c} 
\fancyhead[RO,LE]{\thepage}

\thispagestyle{empty}

%% Title
\vspace*{1cm}
\begin{center}
{\bf\LARGE Mergelyan's and Arakelian's theorems for manifold-valued maps}

\vspace*{0.5cm}

%% Authors
{\large\bf  Franc Forstneri\v c} 
\end{center}

%% Addresses and finantial support
%\footnote[0]{\vspace*{-0.4cm}
%}
%% Abstract, keywords, and MSC

\vspace*{1cm}

\begin{quote}
{\small
\noindent {\bf Abstract}\hspace*{0.1cm}
In this paper we show that Mergelyan's theorem holds for maps from open Riemann surfaces 
to Oka manifolds.  This is used to prove the analogue of Arakelian's theorem on uniform approximation 
of holomorphic maps from closed subsets of plane domains to any compact complex homogeneous
manifold. 
%The same result is proved for open Riemann surfaces of finite topological type
%with at least one non-puncture end. 

\vspace*{0.2cm}

\noindent{\bf Keywords}\hspace*{0.1cm} holomorphic map, homogeneous manifold, elliptic manifold, Oka manifold, Riemann surface, Arakelian's theorem, Mergelyan's theorem

\vspace*{0.1cm}

%\noindent{\bf Mathematics Subject Classification (2010)}\hspace*{0.1cm} 

\noindent{\bf MSC (2010):}\hspace*{0.1cm}}  30E10, 32E10, 32E30, 32H02
% 32C25: Analytic subsets and submanifolds 
% 32E10:  Stein spaces, Stein manifolds
% 32E30: Holomorphic and polynomial approximation, Runge pairs, interpolation
% 32H02  Holomorphic mappings, (holomorphic) embeddings and related questions 
% 32M12  Almost homogeneous manifolds and spaces [See also 14M17] 
% 32S20: Global theory of singularities; cohomological properties
% 30E10  Approximation in the complex domain 
\end{quote}

%%%%%%%%%%
%%%%%%%%%%
%%%%%%%%%%
%%%%%%%%%%     INTRODUCTION 
%%%%%%%%%%
%%%%%%%%%%

\section{Introduction} 
\label{sec:intro}

The goal of this paper is to extend some results of holomorphic approximation theory
to maps with values in complex manifolds. We focus on two classical approximation
theorems: Mergelyan's theorem \cite{Mergelyan1951}, extended  to compact sets
in open Riemann surfaces by E.\ Bishop  \cite{Bishop1958DMJ}, and 
Arakelian's theorem \cite{Arakelian1964}. 

Given a closed set $E$ in a complex manifold $X$, we denote
by $\Ascr(E)$ the algebra of all continuous functions on $E$ which are holomorphic
in the interior $\mathring E$. In a series of papers beginning in 1964, 
N.\ U.\ Arakelian \cite{Arakelian1964,Arakelian1968,Arakelian1971} 
proved that the following conditions are equivalent for a closed set $E$ in a domain $X\subset \mathbb{C}$: 
\begin{itemize}
\item[\rm (a)]\ Every function in $\Ascr(E)$ is a uniform limit of functions holomorphic on $X$.
\item[\rm (b)]\ The complement $X^*\setminus E$ of $E$ in the one point compactification 
$X^*=X\cup\{*\}$ of $X$ is connected and locally connected.
\end{itemize}
A closed set $E$ satisfying these equivalent conditions is called an {\em Arakelian set} in $X$. 
When $E$ is compact, condition (b) simply says that $X\setminus E$ is connected, and in this case
Arakelian's theorem coincides with S.~N. Mergelyan's theorem \cite{Mergelyan1951}. 

In Sect.\ \ref{sec:proof1} we prove the following version of 
Arakelian's theorem for maps from plane domains into any compact complex homogeneous manifold.
See also Theorem \ref{th:Arakelian2} and Corollary \ref{cor:Arakelian2} for a generalisation to maps 
from more general open Riemann surfaces.

%%%%%%%%%%%%%%%%%%%%%%%%%%%%%%%%%
%
%   ARAKELIAN FOR MAPS TO HOMOGENEOUS MANIFOLDS
%
%%%%%%%%%%%%%%%%%%%%%%%%%%%%%%%%%
\begin{theorem} \label{th:Arakelian1}
Assume that $Y$ is a compact complex homogeneous manifold or $Y=\mathbb{C}^n$. 
If $E$ is an Arakelian set in a domain $X\subset \mathbb{C}$, then every continuous map 
$X\to Y$ which is holomorphic in $\mathring E$ can be approximated 
uniformly on $E$ by holomorphic maps $X\to Y$.

If in addition $M$ is a closed subset of $Y$ of Hausdorff dimension $<2\dim Y-2$
then the approximating maps may be chosen to have range in $Y\setminus M$.
\end{theorem}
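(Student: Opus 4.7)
The plan is to deduce Theorem \ref{th:Arakelian1} from the manifold-valued Mergelyan theorem for maps from open Riemann surfaces to Oka manifolds (the central technical result announced in the abstract), by a classical Arakelian-type exhaustion, and then to refine the construction to avoid the exceptional set $M$ by a general position argument. A compact complex homogeneous manifold $Y=G/H$ is an Oka manifold, since the transitive action of the complex Lie group $G$ provides a dominating spray; the target $\mathbb{C}^n$ is Oka as well and can in fact be treated componentwise from the classical scalar Arakelian theorem. So Mergelyan's theorem for $Y$-valued maps on compact sets in $X$ is available at each stage of the argument.

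To pass from Mergelyan to Arakelian, I would choose an exhaustion $X_1\Subset X_2\Subset\cdots$ of $X$ by compact sets whose construction exploits Arakelian's topological hypothesis on $E$: the connectedness and local connectedness of $X^*\setminus E$ allow one to arrange that each set $K_n:=X_n\cup(E\cap X_{n+1})$ has connected complement in $X$, so that $K_n$ is a Mergelyan-type compact set in $X$; this is precisely the mechanism by which condition (b) enters the classical scalar proof. I then build inductively a sequence $f_n:X\to Y$ of holomorphic maps with $f_{n+1}$ approximating $f_n$ to within $\varepsilon/2^n$ on $X_n$ and approximating $f$ to within $\varepsilon/2^n$ on $E\cap X_{n+1}$, measured in a fixed invariant (or Kobayashi) distance on $Y$. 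Each inductive step applies the manifold-valued Mergelyan theorem on $K_n$: the inductive hypothesis that $f_n$ is close to $f$ on $E\cap X_n$ allows one to patch $f_n$ on $X_n$ with $f$ on $E\cap X_{n+1}$ into a single element of $\Ascr(K_n,Y)$, using the dominating spray on $Y$ (translations by elements of $G$ near the identity, respectively translations in $\mathbb{C}^n$) to interpolate across the overlap. Summability of the errors ensures that the sequence $f_n$ converges uniformly on $E$ to a holomorphic map $\widetilde f:X\to Y$ with $\widetilde f|_E$ close to $f|_E$, proving the first statement.

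For the addendum, about the closed set $M\subset Y$ of Hausdorff dimension strictly less than $2\dim Y-2$, I would refine each inductive step by composing the output of Mergelyan with a small element $g\in G$ (or a small translation in $\mathbb{C}^n$) chosen generically. Since $X$ is one-complex-dimensional, the image of any holomorphic map $X\to Y$ has real dimension at most $2$, while $M$ has real codimension strictly greater than $2$ in $Y$; a measure-theoretic/transversality argument on the parameter $g$ shows that for a dense set of $g$ arbitrarily close to the identity the translated image $g\cdot f_{n+1}(X)$ is disjoint from $M$. Inserted into the inductive scheme, this yields approximants with range in $Y\setminus M$. The main technical obstacle I anticipate is the patching step in the manifold-valued setting: lacking additive structure one cannot simply take a difference $f-f_n$ and must instead construct a holomorphic homotopy between $f_n$ and $f$ on a neighborhood of $X_n\cap E$ using the Oka spray, glue this with $f$ on $E\cap X_{n+1}\setminus X_n$ into a single $\Ascr(K_n,Y)$-map, and track the approximation bounds and the genericity condition for avoidance of $M$ simultaneously without letting them interfere.
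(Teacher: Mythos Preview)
Your overall architecture---Arakelian exhaustion plus the manifold-valued Mergelyan theorem at each step---is the right one, and your treatment of the exceptional set $M$ by a generic group translation matches the paper's Lemma~\ref{lem:avoiding}. But the patching step you identify as ``the main technical obstacle'' is a genuine gap, and the fix you sketch does not close it.

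The difficulty is not merely the lack of additive structure on $Y$; it is already the crux in the scalar Rosay--Rudin proof that the paper follows. If $\mathring E$ meets the transition annulus between $X_n$ and $X_{n+1}$ (as it will whenever $\mathring E$ is unbounded, e.g.\ $E$ a half-plane), then no cutoff-based interpolation produces a map in $\Ascr(K_n,Y)$: writing $f=s(f_n,t)$ in spray coordinates on the overlap and setting $g(z)=s\bigl(f_n(z),\chi(z)t(z)\bigr)$ gives a map that is \emph{not} holomorphic on $\mathring E$ where $\chi$ varies. Your ``holomorphic homotopy'' is precisely the spray parameter $t$, but gluing it against $0$ across a cutoff destroys the $\Ascr$-property on the interior of $E$, so Mergelyan cannot be applied to the result. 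Trying instead to separate the two pieces so that no gluing is needed forces holes into the compact set, again blocking Mergelyan.

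What the paper supplies here is a nonlinear splitting lemma on closed Cartan pairs (Lemma~\ref{lem:splitting}), and this is the heart of the proof, occupying all of Section~\ref{sec:splitting}. It rests on a bounded linear Cousin-I splitting obtained from the Cauchy--Green operator $T_K$ (Lemma~\ref{lem:linear-splitting}). In the inductive step the paper first applies Mergelyan on a compact piece to get a holomorphic $h$ close to $f_{i-1}$, encodes the discrepancy on the overlap as a fibre map $\gamma\approx\Id$ in spray coordinates, and then factors $\gamma\circ\alpha=\beta$ with $\alpha\in\Ascr(A)$, $\beta\in\Ascr(B)$ on the Cartan pair $(A,B)$; the glued map $f_i$ is $s(f_{i-1},a(\cdot,0))$ on $A$ and $s(h,b(\cdot,0))$ on $B$. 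This $\dibar$-with-bounds mechanism, not an Oka homotopy, is what makes the induction run. A secondary architectural difference: the paper keeps $f_i\in\Ascr(E_i,Y)$ with $E_i\supset E$ throughout, rather than making each $f_n$ globally holomorphic; this retains uniform control on all of $E$ at every stage.
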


As an interesting special case, we see that Arakelian's theorem holds for maps from
plane domains into any projective space $\CP^n$.

The distance between maps into $Y$ is measured with respect to a fixed  Riemannian metric on $Y$; 
due to compactness any two such metrics are comparable, and hence the statement of the theorem is 
independent of the choice of the metric. On $\mathbb{C}^n$ we use the standard Euclidean metric.
The main case is when $M=\varnothing$; the result for maps into $Y\setminus M$ follows 
by applying the transversality theorem, moving the map slightly so that the image 
misses $M$ (see Lemma \ref{lem:avoiding}). By topological reasons it is necessary 
to assume that the map $E\to Y$ to be approximated extends to a continuous map $X\to Y$.

Global approximation theorems, such as Theorem \ref{th:Arakelian1}, 
fail in general without assuming a suitable holomorphic flexibility property of the target manifold.
(A discussion of flexibility properties of complex manifolds can be found in 
\cite[Chapter 7]{Forstneric2017E}.) In particular, if $Y$ is Kobayashi hyperbolic
then a nonconstant holomorphic map from the disc into $Y$ 
cannot be approximated by holomorphic maps $\mathbb{C}\to Y$.
It is likely impossible to characterize the class of compact complex manifolds $Y$ for which 
Theorem \ref{th:Arakelian1} holds. Natural candidates are Oka manifolds. 
A complex manifold $Y$ is said to be an {\em Oka manifold} 
if the Runge approximation theorem holds for maps $X\to Y$ from any Stein manifold $X$ 
(or, more generally, from any reduced Stein space), with approximation on compact 
$\Ocal(X)$-convex subsets of $X$. (For $Y=\mathbb{C}$, this is the Oka-Weil theorem.)  
For the precise statement, see \cite[Theorem 5.4.4]{Forstneric2017E}
and its corollaries. Introductory surveys of Oka theory can be found in 
\cite{Forstneric2013KAWA,ForstnericLarusson2011}. Every complex homogeneous manifold
is an Oka manifold according to Grauert \cite{Grauert1958MA} 
(see also \cite[Proposition 5.6.1]{Forstneric2017E}). Although we do not know whether 
Theorem \ref{th:Arakelian1} holds for maps to all Oka manifolds,
we will show that Mergelyan's theorem does.

If $K$ is a compact set in a Riemann surface $X$ then a 
relatively compact connected component of $X\setminus K$ is called a {\em hole} of $K$ (in $X$).
The theorem of S.\ N.\ Mergelyan \cite{Mergelyan1951} from 1951, extended 
to open Riemann surfaces by E.\ Bishop \cite{Bishop1958DMJ} in 1958, says that 
for any compact set $K$ without holes in an open Riemann surface $X$, every function
$f\in\Ascr(K)$ is a uniform limit of functions in $\Ocal(X)$.
With the aid of a theorem of E.\ Poletsky \cite{Poletsky2013} (see Theorem \ref{th:Poletsky3.1}) 
we obtain the following extension of Mergelyan's theorem to maps into Oka manifolds.
This provides the induction step in the proof of Theorem \ref{th:Arakelian1}. % (see Sect.\ \ref{sec:proof1}). 

%
%   Mergelyan THEOREM FOR MAPS TO OKA MANIFOLDS
%
\begin{theorem}[Mergelyan theorem for maps from Riemann surfaces to Oka manifolds] 
\label{th:Mergelyan-Oka}
Assume that $K$ is a compact set without holes in an open Riemann surface $X$,  
and let $Y$ be an Oka manifold.
Then, every continuous map $f\colon X\to Y$ which is holomorphic in $\mathring K$
can be approximated uniformly on $K$ by holomorphic maps $X\to Y$ homotopic to $f$.
\end{theorem}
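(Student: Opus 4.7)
The plan is to combine the Mergelyan-type theorem of Poletsky (Theorem \ref{th:Poletsky3.1}) with the Oka property of $Y$. Poletsky's theorem will upgrade $f$ from being merely continuous on $K$ and holomorphic on $\mathring K$ to being holomorphic on a genuine open neighbourhood of $K$, and the Oka property will then upgrade approximation in a neighbourhood of $K$ to approximation by holomorphic maps on all of $X$, in the same homotopy class as $f$.

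Fix a Riemannian metric on $Y$ and let $\epsilon>0$. First, applying Poletsky's theorem to $f|_K$, we obtain a map $\tilde f$ holomorphic in some open neighbourhood $U\subset X$ of $K$ with $\sup_K \dist_Y(f,\tilde f)<\epsilon$. After shrinking $U$, we may assume that $\dist_Y(f(x),\tilde f(x))$ stays below the injectivity radius of $Y$ for all $x\in U$, so there is a unique continuous tangent vector field $v$ along $f|_U$ with $\exp_{f(x)}v(x)=\tilde f(x)$. Pick a relatively compact open $V$ with $K\subset V\Subset U$ and a continuous cutoff $\chi\colon X\to [0,1]$ equal to $1$ on $V$ and with $\supp \chi\subset U$, and set
\[
F(x)=\exp_{f(x)}\bigl(\chi(x)v(x)\bigr)\ \text{on $U$}, \qquad F(x)=f(x)\ \text{on $X\setminus U$}.
\]
Then $F\colon X\to Y$ is continuous, equals $\tilde f$ on $V$ (hence is holomorphic in a neighbourhood of $K$), agrees with $f$ off $U$, stays uniformly close to $f$, and is homotopic to $f$ via the homotopy $F_t(x)=\exp_{f(x)}\bigl(t\chi(x)v(x)\bigr)$, $t\in[0,1]$.

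Since $X$ is Stein (an open Riemann surface) and the compact set $K$ has no holes, $K$ is $\Ocal(X)$-convex. Applying the basic Oka property with approximation for the Oka manifold $Y$ to $F$ — continuous on $X$ and holomorphic on a neighbourhood of the $\Ocal(X)$-convex compact set $K$ — we obtain a homotopy $\{G_s\}_{s\in[0,1]}$ from $G_0=F$ to a holomorphic map $G_1=:g\colon X\to Y$, with each $G_s$ uniformly $\epsilon$-close to $F$ on $K$. Then $g\simeq F\simeq f$ and $\sup_K\dist_Y(f,g)<2\epsilon$; letting $\epsilon\to 0$ finishes the proof.

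The central obstacle is the passage from "holomorphic on $\mathring K$ only" to "holomorphic on an honest open neighbourhood of $K$": the Oka theorem applies to maps that are already holomorphic on such a neighbourhood, and it is precisely this gap that Poletsky's theorem closes in the manifold-valued setting. The patching via the exponential map in Step 2 and the global Oka approximation in Step 3 are then fairly routine, the latter being a direct appeal to the defining property of Oka manifolds.
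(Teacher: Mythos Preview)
Your overall strategy is the paper's: approximate $f|_K$ by a map holomorphic in a neighbourhood of $K$, glue this with $f$ to get a global continuous map homotopic to $f$, then invoke the Oka property. Steps~2 and~3 of your argument (the exponential-map gluing and the Oka approximation on the $\Ocal(X)$-convex compact $K$) are fine and match the paper.

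The gap is in Step~1. Poletsky's theorem (Theorem~\ref{th:Poletsky3.1}) does \emph{not} say that a map $f\in\Ascr(K,Y)$ can be approximated by maps holomorphic in a neighbourhood of $K$. It says that \emph{if} $f\in\Ocalcl(K,Y)$ (i.e., $f$ is already locally approximable by holomorphic maps), \emph{then} the graph of $f$ over $K$ is a Stein compact in $X\times Y$. So two things are missing from your invocation: you have not verified the hypothesis $f\in\Ocalcl(K,Y)$, and you have not explained how a Stein neighbourhood of the graph yields the desired approximation. Neither step is automatic. The paper fills this gap via Theorem~\ref{th:Mergelyan-local}: since $K$ has no holes, the scalar Bishop--Mergelyan theorem gives $\Ascr(K)=\Ocalc(K)$; the Boivin--Jiang localisation theorem then gives $\Ascr(K\cap D)=\Ocalc(K\cap D)$ for small parametric discs $D$; applying Lemma~\ref{lem:Steinnbd} locally (using that $f(K\cap D)$ lies in a Stein chart of $Y$) shows $f\in\Ocalcl(K,Y)$; \emph{now} Poletsky gives the Stein neighbourhood of the graph, and a second application of Lemma~\ref{lem:Steinnbd} (embedding that Stein neighbourhood in $\mathbb{C}^N$, approximating componentwise, retracting) produces the map $\tilde f$ you want. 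In short, what you call ``Poletsky's theorem'' is really the content of Theorem~\ref{th:Mergelyan-local}, and its proof requires several further ingredients that your write-up omits.
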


We also have a local Mergelyan approximation theorem for maps into an arbitrary complex
manifold. Let us recall the following notion.

%
%   MERGELYAN PROPERTY
%
\begin{definition}\label{def:LMP}
A compact set $K$ in a Riemann surface $X$ has the {\em Mergelyan property}
(also called the {\em Vitushkin property})
if every function in $\Ascr(K)$ can be approximated uniformly on $K$ by functions 
holomorphic in open neighborhoods of $K$ in $X$.
\end{definition}

%
%   MERGELYAN'S THEOREM FOR MAPS TO OKA MANIFOLDS
%
\begin{theorem}[Local Mergelyan theorem for manifold-valued maps] 
\label{th:Mergelyan-local}
If $X$ is a Riemann surface and $K$ is a compact set in $X$ with the Mergelyan property,
then $K$ also has the Mergelyan property for maps to an arbitrary complex manifold $Y$:
every continuous map $f\colon K\to Y$ which is holomorphic in $\mathring K$
can be approximated uniformly on $K$ by holomorphic maps $U=U_f\to Y$
from open neighborhoods of $K$ in $X$.
\end{theorem}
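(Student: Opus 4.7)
The plan is to reduce to scalar Mergelyan in coordinate charts on $Y$ and glue the resulting local approximations by a Cartan-type argument. First, using compactness of $K$ and continuity of $f$, I would choose a finite open cover $U_1,\ldots,U_m$ of $K$ in $X$ together with holomorphic charts $\phi_i\colon V_i\to B_i\subset\mathbb{C}^n$ of $Y$ (with $n=\dim Y$ and $B_i$ a ball) such that $f(K\cap\overline{U_i})\subset V_i$ for each $i$. By refining if necessary, I would also arrange that whenever $K\cap\overline{U_i}\cap\overline{U_j}\ne\varnothing$, the union of the corresponding images lies in a single chart, so that any two local holomorphic approximations of $f$ on the overlapping pieces can be compared in coordinates.

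Set $K_j=K\cap\overline{U_1\cup\cdots\cup U_j}$. I would construct by induction on $j$ a holomorphic map $g_j\colon\Omega_j\to Y$ from an open neighborhood $\Omega_j\supset K_j$, uniformly close to $f$ on $K_j$, with the allowed errors at successive stages shrunk fast enough to make the total error smaller than the given tolerance. For the base step, $\phi_1\circ f|_{K_1}$ is a continuous $\mathbb{C}^n$-valued map on $K_1$, holomorphic in its interior. I would appeal to the Mergelyan property on the subpiece $K_1$ (inherited from $K$ by a Vitushkin-type localization on Riemann surfaces) and apply the scalar statement componentwise to produce a holomorphic approximation on a neighborhood; composing with $\phi_1^{-1}$ yields $g_1$. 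For the inductive step, set $S=K\cap\overline{U_{j+1}}$; the same base-step argument, carried out in the chart $\phi_{j+1}$, produces a holomorphic approximation $h\colon\Omega'\to Y$ of $f$ on a neighborhood of $S$. On a neighborhood of $K_j\cap S$, both $g_j$ and $h$ lie in a common chart by construction, and their difference in chart coordinates is a small $\mathbb{C}^n$-valued holomorphic map. Thickening $K_j$ and $S$ slightly to a Cartan pair in $X$ and applying additive Cartan splitting, I would write this difference as $a-b$ with $a$ holomorphic near $K_j$, $b$ holomorphic near $S$, and both arbitrarily small; then $g_{j+1}$ is defined by $g_j-a$ near $K_j$ and $h-b$ near $S$ in chart coordinates, afterwards pulled back to $Y$ by $\phi_{j+1}^{-1}$. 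The two formulas agree on the overlap, giving a single holomorphic $Y$-valued map near $K_{j+1}$.

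The main obstacle is the passage from scalar Mergelyan on $K$ to the same property on the compact pieces $K_i$: a priori the hypothesis is only global on $K$, and some form of localization is required. I would handle this either by invoking the Vitushkin-type local characterization of the Mergelyan property on Riemann surfaces (cf.\ \cite{Forstneric2017E}) or by a direct extension argument using the structure of $K$ near $\partial U_i$. Once localization is available, the gluing is routine Cartan-style machinery, and one only has to choose the approximation tolerances small enough to keep the iterates inside the prescribed chart neighborhoods throughout the construction.
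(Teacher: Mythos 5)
Your first step --- localizing the scalar Mergelyan property from $K$ to the pieces $K\cap\overline{U_i}$ and then applying scalar Mergelyan componentwise in a chart of $Y$ --- is exactly the right opening move, and the tool you are reaching for exists: the Boivin--Jiang theorem gives $\Ascr(K\cap D)=\Ocalc(K\cap D)$ for every closed parametric disc $D$ once $\Ascr(K)=\Ocalc(K)$. This yields $f\in\Ocalcl(K,Y)$, which is where the paper also starts.

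The genuine gap is in the gluing. The additive Cousin-I splitting of the difference $g_j-h=a-b$ produces a correction $a$ that is holomorphic on a full neighborhood $A$ of $K_j$ and is nonzero essentially everywhere on $A$ (it contains a Cauchy-transform term), so to define $g_{j+1}$ you must form $g_j-a$ on all of $A$. But $g_j$ is $Y$-valued and only takes values in the chart $V_{j+1}$ near the overlap $K_j\cap S$; away from the overlap there is no meaning to subtracting a $\mathbb{C}^n$-valued function from a $Y$-valued map, and interpreting the subtraction chart by chart is neither well defined nor compatible. (A second instance of the same difficulty: $K_j\cap S$ generally meets several of the $\overline{U_1},\dots,\overline{U_j}$, whose associated charts differ, so even the difference $g_j-h$ cannot be written in a single coordinate system on the whole overlap.) Gluing manifold-valued maps on a Cartan pair requires either a dominating spray on the target --- which an arbitrary $Y$ does not have; this is precisely why the paper's Arakelian argument is restricted to homogeneous/special elliptic $Y$ --- or a Stein neighborhood of the graph. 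The paper's proof takes the latter route and thereby avoids gluing altogether: having shown $f\in\Ocalcl(K,Y)$, it invokes Poletsky's theorem that the graph of $f$ over the Stein compact $K$ admits a Stein neighborhood $\Sigma\subset X\times Y$, embeds $\Sigma$ in $\mathbb{C}^N$, applies the hypothesis $\Ascr(K)=\Ocalc(K)$ componentwise, and composes with a holomorphic retraction onto $\Sigma$. Your argument needs this ingredient (or an equivalent) to close; as written, the inductive step fails.
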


Theorems \ref{th:Mergelyan-Oka} and \ref{th:Mergelyan-local} are proved in Sect.\ \ref{sec:preliminaries}. 

In view of Runge's theorem \cite{Runge1885}, 
a compact set $K$ in $\mathbb{C}$ has the Mergelyan property if and only if
$\Ascr(K)$ equals $\Rscr(K)$, the uniform closure in $\Cscr(K)$ of the space of 
rational functions with poles off $K$. A characterization of this class of plane compacts 
in terms of the continuous analytic capacity was given by 
A.\ G.\ Vitushkin in 1966 \cite{Vitushkin1966,Vitushkin1967}. 
See also the exposition in T.\ W.\ Gamelin's book \cite{Gamelin1984}.

Earlier instances of Mergelyan's theorem for manifold-valued maps from compact sets in $\mathbb{C}$
were obtained by K.\ K{\"o}nigsberger (1986) (for maps to complex Lie groups;
his paper is mentioned in \cite{Dietmair1993} and 
\cite{Winkelmann1998}, but is not listed by MathSciNet or Zentralblatt),
T.\ Dietmair \cite{Dietmair1993} (for maps to complex homogeneous manifolds), and 
J.\ Winkelmann \cite{Winkelmann1998} (for maps to $\mathbb{C}^2\setminus \R^2$, and to $\mathbb{C}^n\setminus A$
where $A$ is a closed subset of $\mathbb{C}^n$ of Hausdorff dimension $<2n-2$).
The first two mentioned results are special cases of Theorem \ref{th:Mergelyan-Oka}
since every complex homogeneous manifold is Oka. This is not the case for
Winkelmann's theorem concerning maps to the domain $\mathbb{C}^2\setminus \R^2$ 
which is not known to be Oka. Mergelyan's theorem for $\Cscr^k$ 
functions $(k\in \N=\{1,2,\ldots\})$ on compact sets in $\mathbb{C}$ was proved by 
J.\ Verdera in 1986, \cite{Verdera1986PAMS}, who showed that
for every compact set $K$ in $\mathbb{C}$ and compactly supported function $f\in \Cscr^k(\mathbb{C})$
 such that $\di f/\di \bar z$ vanishes on $K$ to order $k-1$, 
$f$ can be approximated in $\Cscr^k(\mathbb{C})$ by functions holomorphic in neighborhoods of $K$. 
Verdera's result extends to manifold-valued maps by the proof of Theorem \ref{th:Mergelyan-local}.

Some Mergelyan type approximation theorems are also known for maps from higher dimensional
Stein manifolds. For example, if $D$ is a relatively compact strongly 
pseudoconvex domain in a Stein manifold $X$ and $Y$ is an arbitrary complex 
manifold, then any map $\bar D\to Y$ of class $\Ascr^k(D,Y)$ for some $k\in\Z_+=\{0,1,2,\ldots\}$ 
can be approximated in the $\Cscr^k$ topology by maps holomorphic in open 
neighborhoods of $\bar D$ in $X$ (see \cite[Theorem 1.2]{DrinovecForstneric2008FM}
and  \cite[Theorem 8.11.4]{Forstneric2017E}). Approximation of manifold-valued maps on 
Stein compacts of the form $K\cup M$, where $K$ is a 
Stein compact and $M$ is a totally real submanifold, was obtained in 
\cite[Theorem 3.2]{Forstneric2004AIF}.
See also \cite[Corollaries 5.4.6, 5.4.7 and Theorem 8.11.4]{Forstneric2017E} and
the survey \cite{FFW2018} by J.\ E.\ Forn{\ae}ss, E.\ F.\ Wold and the author.

%
%   ON ARAKELIAN THEOREM FOR MORE GENERAL OPEN RIEMANN SURFACES
% 
It is natural to ask whether Arakelian's theorem, and its extensions presented in this paper,
holds for maps from more general open Riemann surfaces.
In 1975, P.\ M.\ Gauthier and W.\ Hengartner \cite{GauthierHengartner1975} proved that 
Arakelian's condition (the complement $X^*\setminus E$ of $E$ in the one point 
compactification $X^*=X\cup\{*\}$ of $X$ is connected and locally connected)
is necessary for uniform approximation of functions on a closed subset $E$ 
in an arbitrary connected open Riemann surface $X$, but the converse fails in general.
Several sufficient conditions can be found in the papers by P.\ M.\ Gauthier \cite{Gauthier1980} 
and A.\ Boivin and P.\ M.\ Gauthier \cite[pp.\ 119--121]{BoivinGauthier2001}.
In Sect.\ \ref{sec:Arakelian2} we give a sufficient condition
on an open Riemann surface $X$ (see Definition \ref{def:CondBH})
which makes it possible to prove the Arakelian theorem for maps from $X$
to compact homogeneous manifolds (see Theorem \ref{th:Arakelian2} and Corollary \ref{cor:Arakelian2}).

It is also natural to ask whether Arakelian's theorem holds for certain closed subsets $E$ 
in pseudoconvex domains $X\subset\mathbb{C}^n$ for $n>1$, or in more general Stein manifolds.
A special case studied in the literature are closed sets of the form $E=K\cup M$, where $K$ is 
a compact $\Ocal(X)$-convex subset of $X$ and $M$ is a (possibly stratified) totally real submanifold of $X$. 
Assuming that $E=K\cup M$ as above 
is $\Ocal(X)$-convex and has the {\em bounded exhaustion hulls property} 
(see Definition \ref{def:BEH2}), Carleman type approximation theorems for functions 
in the fine topology on $E$ were obtained by P.\ Manne \cite{Manne1993}, 
P.\ Manne, E.\ F.\ Wold, and N.\ {\O}vrelid \cite{ManneWoldOvrelid2011},
and B.\ S.\ Magnusson and E.\ F.\ Wold \cite{MagnussonWold2016}. 
Recently, B.\ Chenoweth \cite{Chenoweth2018} 
extended these results to maps with values in an arbitrary Oka manifold. 
On the other hand, nothing seems known about uniform approximation  
on closed sets $E$ whose interior $\mathring E$ fails to be relatively compact. 
The main problem seems to be lack of holomorphic integral kernels
with properties comparable to those of the Cauchy-Green kernel in the plane.
Such kernels have been constructed on strongly pseudoconvex domains by 
G.\ M.\ Henkin \cite{Henkin1969} and E.\ Ram{\'i}rez de Arellano \cite{Arellano1969} 
(see also \cite[Sect.\ 3]{ForstnericLowOvrelid2001} and the monographs \cite{HenkinLeiterer1984,LiebMichel2002,Range1986}). Unlike the Cauchy-Green kernel, 
Henkin-Ram{\'i}rez kernels depend on the domain, and it seems difficult, if not impossible, 
to apply them in Arakelian type approximation. 

We wish to draw reader's attention to the recent survey \cite{FFW2018}
of holomorphic approximation theory, with emphasis on generalisations
of Runge's, Mergelyan's, Carleman's and Arakelian's theorems 
to higher dimensional domains and manifold-valued maps.

%%%%%%%%%%%%%%%%%%%%%%%%%%%%%%%%%
%
%  PRELIMINARIES
%
%%%%%%%%%%%%%%%%%%%%%%%%%%%%%%%%%

\section{Preliminaries}\label{sec:preliminaries}

Let $X$ be a complex manifold. We denote by $\Cscr(X)$ and $\Ocal(X)$ the Frech{\'e}t algebras of all 
continuous and holomorphic functions on $X$, respectively; these spaces carry
the compact-open topology. Given a compact set $K$ in $X$, we denote by $\Cscr(K)$ the Banach algebra 
of all continuous complex valued functions on $K$ endowed with the sup-norm, 
by $\Ocal(K)$ the algebra of all functions $f$ that are holomorphic in a neighborhood $U_f\subset X$ 
of $K$ (depending on the function) with inductive limit topology, 
and by $\Ocalc(K)$ the uniform closure of $\{f|_K: f\in \Ocal(K)\}$ in $\Cscr(K)$. 
By $\Ascr(K)$ we denote the closed subalgebra of $\Cscr(K)$
consisting of all continuous function $K\to\mathbb{C}$ which are holomorphic in the interior $\mathring K$.
If $r\in \{0,1,2,\ldots, \infty\}$, we let $\Cscr^r(K)$ denote the space of all functions on $K$
which extend to $r$-times continuously differentiable functions on $X$, 
and $\Ascr^r(K)=\Cscr^r(K)\cap \Ocal(\mathring K)$.
Given a complex manifold $Y$, we use the analogous notation $\Ocal(X,Y)$, $\Ocal(K,Y)$,
$\Ascr(K,Y)$, etc., for the corresponding classes of maps into $Y$. Furthermore, we denote by
\[
		\Ocalcl(K,Y)
\]
the set of all maps $f\in \Ascr(K,Y)$ which are locally approximable by holomorphic maps, in the sense 
that every point $x \in K$ has an open neighborhood $U\subset X$ 
such that $f|_{K\cap \overline U}\in \Ocalc(K\cap \overline U)$. 
(Note that uniform approximability on a compact set is independent of the choice
of a Riemannian distance function on $Y$.) We have natural inclusions
\[
	 \{f|_K : f\in \Ocal(K,Y)\} \subset \Ocalc(K,Y) \subset \Ocalcl(K,Y)\subset \Ascr(K,Y).
\]
When $Y=\mathbb{C}$, we delete it from the notation.

We say that the space $\Ascr(K,Y)$ enjoys the {\em Mergelyan property} if
\begin{equation}\label{eq:AO}
	\Ascr(K,Y) = \Ocalc(K,Y),
\end{equation}
and that it enjoys the {\em local Mergelyan property} if 
\begin{equation}\label{eq:LMP}
	\Ocalcl(K,Y) = \Ascr(K,Y).
\end{equation}

According to Bishop's localization theorem \cite{Bishop1958PJM}, the Mergelyan property for functions
on a compact set $K$ in a Riemann surface $X$ is localizable: 

{\em Given $f\in\Cscr(K)$, if every point $x\in K$ has a compact neighborhood $D_x \subset X$ such that
$f|_{K\cap D_x}\in \Ocalc(K\cap D_x)$, then $f\in \Ocalc(K)$.} 

The following converse result is due to A.\ Boivin and B.\ Jiang  \cite[Theorem 1]{BoivinJiang2004}: 

{\em Let $E$ be a closed subset of a Riemann surface $X$. If $\Ascr(E)=\Ocalc(E)$, 
then $\Ascr(E\cap D)=\Ocalc(E\cap D)$ holds for every closed parametric disc $D\subset X$.}

Recall that a closed parametric disc is the preimage $D=\phi^{-1}(\Delta)$ of a closed disc 
$\Delta \subset \phi(U)\subset \mathbb{C}$, where $(U,\phi)$ is a holomorphic chart on $X$.

For compact sets $K$ in $\mathbb{C}$ we have $\Ocalc(K) = \Rscr(K)$ by Runge's theorem,
and hence the Mergelyan property $\Ascr(K) = \Ocalc(K)$
is equivalent to $\Ascr(K) = \Rscr(K)$. This property of $K$ was characterized
by Vitushkin \cite{Vitushkin1966} in terms of continuous analytic capacity.

The following observation amounts to a standard application of the Bishop-Narasimhan-Remmert 
embedding theorem  (see \cite[Theorem 2.4.1]{Forstneric2017E}) and the Docquier-Grauert tubular neighborhood 
theorem (see \cite{DocquierGrauert1960} or \cite[Theorem 3.3.3]{Forstneric2017E}). 

%
%   LEMMA: STEIN NEIGHBORHOOD IMPLIES APPROXIMATION
%
\begin{lemma}\label{lem:Steinnbd}
Assume that $K$ is a compact set in a complex manifold $X$ satisfying the Mergelyan
property for functions: $\Ocalc(K)  = \Ascr(K)$. Let $Y$ be a complex manifold, and assume that $f\in \Ascr(K,Y)$ 
satisfies one of the following conditions.
\begin{itemize}
\item[\rm (a)] The image $f(K)\subset Y$ has a Stein neighborhood in $Y$. 
\item[\rm (b)] The graph $G_f = \bigl\{(x,f(x)) : x\in K\bigr\}$ has a Stein neighborhood in $X\times Y$.
\end{itemize} 
Then, $f\in \Ocalc(K,Y)$.
\end{lemma}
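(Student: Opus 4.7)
The plan is to reduce both cases to the scalar Mergelyan property by means of an embedding into Euclidean space plus a holomorphic retraction, so that approximation can be carried out coordinate-by-coordinate and then pushed back to the target.

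For case~(a), let $\Omega \subset Y$ be an open Stein neighborhood of $f(K)$. By the Bishop--Narasimhan--Remmert embedding theorem I would embed $\Omega$ as a closed complex submanifold $\iota\colon \Omega \hookrightarrow \mathbb{C}^N$, and by the Docquier--Grauert tubular neighborhood theorem I would fix an open neighborhood $V \subset \mathbb{C}^N$ of $\iota(\Omega)$ together with a holomorphic retraction $\rho\colon V \to \iota(\Omega)$. The composition $\iota \circ f\colon K \to \mathbb{C}^N$ lies in $\Ascr(K,\mathbb{C}^N)$, so applying the hypothesis $\Ocalc(K) = \Ascr(K)$ to each coordinate yields a holomorphic map $G\colon U \to \mathbb{C}^N$, defined on an open neighborhood $U \supset K$, that is uniformly close to $\iota \circ f$ on $K$. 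Since $\iota(f(K))$ is a compact subset of the open set $V$, by taking the approximation close enough and shrinking $U$ toward $K$, I can force $G(U) \subset V$, and then $\iota^{-1} \circ \rho \circ G\colon U \to \Omega \subset Y$ is a holomorphic map in $\Ocal(K,Y)$ approximating $f$ uniformly on $K$; here I use that $\rho \circ \iota = \iota$ on $\Omega$ together with the Lipschitz behavior of $\iota^{-1}$ and $\rho$ on compact subsets.

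For case~(b), I would repeat the same construction but applied to a Stein neighborhood $W \subset X \times Y$ of the graph $G_f$. Embed $\iota\colon W \hookrightarrow \mathbb{C}^N$ as a closed submanifold, fix a holomorphic retraction $\rho\colon V \to \iota(W)$ on an open tubular neighborhood, and consider the map $F := \iota \circ (\mathrm{id}_K,f)\colon K \to \mathbb{C}^N$, whose coordinates again lie in $\Ascr(K)$. The scalar Mergelyan property provides a holomorphic approximation $G\colon U \to \mathbb{C}^N$ with $G(U) \subset V$, and then $H := \iota^{-1} \circ \rho \circ G\colon U \to W$ is a holomorphic map uniformly close to $(\mathrm{id}_K, f)$ on $K$. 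Writing $H = (H_1, H_2)$ with $H_1\colon U \to X$ and $H_2\colon U \to Y$, the product structure of $X \times Y$ forces both coordinates to be separately approximated: for each $x \in K$ the point $H(x)$ is close to $(x, f(x))$ in $X \times Y$, so $H_2(x)$ is close to $f(x)$ in $Y$. Thus $H_2 \in \Ocal(K,Y)$ approximates $f$ uniformly on $K$, without any need to invert $H_1$.

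The only genuine technical point is the passage from scalar to vector-valued Mergelyan approximation and the control needed to keep $G(U)$ inside $V$: this reduces to applying the hypothesis to each of the $N$ coordinate functions of $F$, taking a common small neighborhood $U$, and using the openness of $V$ together with the compactness of $F(K)$ to absorb the approximation error. Everything else is a formal manipulation of the embedding-and-retraction trick, which is precisely what makes the Mergelyan property transport from $\mathbb{C}$ to arbitrary complex-manifold targets whose image (or graph) admits a Stein neighborhood.
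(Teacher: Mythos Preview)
Your proposal is correct and follows essentially the same route as the paper: embed the Stein neighborhood (of the image or the graph) into $\mathbb{C}^N$ via Bishop--Narasimhan--Remmert, apply the scalar Mergelyan hypothesis componentwise, and push back via a Docquier--Grauert holomorphic retraction. The paper's proof is a three-sentence sketch of exactly this argument; you have simply unpacked the details (keeping $G(U)\subset V$, projecting to the $Y$-factor in case~(b)) that the paper leaves implicit.
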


\begin{proof}
Assume that condition in (b) holds.  We embed a Stein neighborhood of the graph $G_f$  
as a complex submanifold $\Sigma$ of a Euclidean space $\mathbb{C}^N$, apply the hypothesis 
$\Ocalc(K)  = \Ascr(K)$ componentwise, and compose the resulting maps into $\mathbb{C}^N$ 
(which are holomorphic in open neighborhoods of $K$) with a holomorphic 
retraction onto $\Sigma$. This gives  a sequence of approximating holomorphic maps 
from open neighborhoods of $K$ into $Y$. A similar argument applies in case (a).
\end{proof}

Recall that a compact set $K$ in a complex manifold $X$ is said to be a {\em Stein compact}
if $K$ admits a basis of open Stein neighbourhoods in $X$.
The following theorem is due to E.\ Poletsky \cite[Theorem 3.1]{Poletsky2013};
see also \cite[Theorem 32]{FFW2018} and the related discussion.

%
%   POLETSKY'S THEOREM
%
\begin{theorem}[Poletsky \cite{Poletsky2013}] \label{th:Poletsky3.1} 
Let $K$ be a Stein compact in a complex manifold $X$, and let $Y$ be an arbitrary complex manifold.
For every $f\in \Ocalcl(K,Y)$ the graph of $f$ on $K$ is a Stein compact in $X\times Y$. 
In particular, if $\Ascr(K,Y)$ has the local Mergelyan property \eqref{eq:LMP}, then 
the graph of every map $f\in \Ascr(K,Y)$ is a Stein compact in $X\times Y$.
\end{theorem}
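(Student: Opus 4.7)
The plan is to exhibit a basis of open Stein neighborhoods of the graph $G_f=\{(x,f(x)) : x\in K\} \subset X\times Y$ by producing, for each prescribed open neighborhood $\Omega\supset G_f$, a smooth strictly plurisubharmonic function $\rho$ on an open neighborhood of $G_f$ whose small sublevel sets sit inside $\Omega$. The strictly plurisubharmonic model will be assembled from local pieces of two kinds: (i) a smooth strictly psh function on a Stein neighborhood of $K$, available because $K$ is a Stein compact, and (ii) squared distances to graphs of local holomorphic approximations $g_j$ of $f$, available because $f\in\Ocalcl(K,Y)$.

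First I would cover $K$ by finitely many compacts $K_j = K\cap \bar U_j$ with $U_j\Subset X$ chosen so that $f(K_j)$ lies in a coordinate ball $V_j\subset Y$ (identified with an open set of $\mathbb{C}^{\dim Y}$) and $f|_{K_j}$ is a uniform limit of holomorphic maps $g_j\colon W_j\to V_j$ on Stein open neighborhoods $W_j\subset U_j$ of $K_j$. The graph $\Gamma_j=\{(z,g_j(z)):z\in W_j\}$ is a closed complex submanifold of the Stein manifold $W_j\times V_j$, hence by Docquier--Grauert it admits a basis of open Stein tubular neighborhoods. Equivalently, after fixing a smooth strictly psh function $\phi$ on a Stein open neighborhood of $K$, the function
\[
    \rho_j(z,w) \;=\; \|w-g_j(z)\|^2 + \varepsilon\,\phi(z)
\]
is smooth and strictly plurisubharmonic on $W_j\times V_j$; by taking $g_j$ close to $f$ on $K_j$ and $\varepsilon>0$ small, $\rho_j$ can be made arbitrarily small on $G_f\cap(K_j\times V_j)$ while remaining strictly plurisubharmonic.

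To patch, I would set $\rho = \max_j (\rho_j - c_j)$ on $\bigcup_j (W_j\times V_j)$, where constants $c_j$ are tuned so that all summands take comparable values on $G_f$. The maximum of finitely many smooth strictly psh functions is continuous and strictly plurisubharmonic, and Richberg's smoothing theorem produces a smooth strictly psh function $\tilde\rho$ with the same qualitative sublevel geometry. For sufficiently small $\delta>0$, the set $\bigl\{\tilde\rho < \max_{G_f}\tilde\rho + \delta\bigr\}$ is a strictly pseudoconvex, hence Stein, open neighborhood of $G_f$ contained in $\Omega$. Since $\Omega$ was arbitrary, this shows $G_f$ is a Stein compact.

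The chief obstacle is the patching step: the local potentials $\rho_j$ are defined through different approximating maps $g_j$, so one must balance the accuracy of the approximations, the magnitudes of $\varepsilon$ and of the $c_j$, and the smoothing scale in Richberg's theorem in order to guarantee simultaneously that $G_f$ lies inside the sublevel set of the glued potential and that this sublevel set fits inside the prescribed $\Omega$. The final clause of the theorem is then immediate: under the local Mergelyan property \eqref{eq:LMP} one has $\Ascr(K,Y)=\Ocalcl(K,Y)$, so the preceding applies to every $f\in\Ascr(K,Y)$.
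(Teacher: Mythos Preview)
The paper does not prove this theorem; it is quoted from Poletsky \cite{Poletsky2013} and the only remark on the method is that ``Poletsky's proof uses the technique of \emph{fusing plurisubharmonic functions}\ldots similar in spirit to the proofs of Y.-T.\ Siu's theorem\ldots given by M.\ Col\c{t}oiu and J.-P.\ Demailly.'' So there is no detailed argument in the paper to compare against; your sketch is in fact an outline of precisely that fusing technique.

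The local ingredients you set up are the right ones: a strictly plurisubharmonic $\phi$ coming from a Stein neighborhood of $K$, and the squared distances $\|w-g_j(z)\|^2$ to the graphs of local holomorphic approximants $g_j$. The difficulty you flag at the end is genuine, but your proposed resolution is where the argument has a real gap. The function $\rho=\max_j(\rho_j-c_j)$ is \emph{not} automatically plurisubharmonic, because the $\rho_j$ live on different open sets $W_j\times V_j$: at a point on the boundary of one of these sets the corresponding $\rho_j$ simply drops out of the maximum, and the resulting function need not even be upper semicontinuous there. Adjusting the constants $c_j$ so that the $\rho_j-c_j$ are ``comparable on $G_f$'' does nothing to cure this boundary behaviour. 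What is actually required is a gluing lemma of Demailly type: one must arrange, by a careful choice of shrinkings $W_j'\Subset W_j$, cutoffs, and constants, that on a neighborhood of $\partial W_j'\times V_j$ the function $\rho_j-c_j$ is strictly dominated by some $\rho_k-c_k$ with $k\ne j$. Only then does the pointwise maximum (taken over those $j$ for which the point lies in $W_j'\times V_j$) yield a well-defined continuous plurisubharmonic function, to which Richberg can be applied. Carrying this out, with the simultaneous control needed to keep the sublevel set inside the prescribed $\Omega$, is exactly the content of Poletsky's fusing argument, and it is not a triviality---it is the substance of the proof, not a detail to be balanced after the fact.

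A secondary point: you assert that a sublevel set of a smooth strictly psh function is Stein. This is correct provided the sublevel set is relatively compact in the domain of definition (so that the function is an exhaustion of it); you should make sure your construction guarantees this, which again comes down to controlling the behaviour of the glued $\rho$ near the edges of $\bigcup_j W_j\times V_j$.
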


Poletsky's  proof uses the technique of {\em fusing plurisubharmonic functions}.
It is similar in spirit  to the proofs of Y.-T.\ Siu's theorem  \cite{Siu1976} on the existence
of open Stein neighborhoods of Stein subvarieties, 
given by M.\ Col{\c{t}}oiu \cite{Coltoiu1990} and J.-P.\ Demailly \cite{Demailly1990}.
(See also \cite[Sect.\ 3.2]{Forstneric2017E}.)
In the special case when $K$ is the closure of a strongly pseudoconvex domain in a Stein manifold,
it was proved beforehand, and by a different method, that the graph  of 
any map $f\in \Ascr(K,Y)$ is a Stein compact in $X\times Y$
(see  \cite[Corollary 1.3]{Forstneric2007AJM}). 

%
%   PROOF OF LOCAL MERGELYAN THEOREM
%
\begin{proof}[Proof of Theorem \ref{th:Mergelyan-local}]
Since every open Riemann surface is a Stein manifold 
(see H.\ Behnke and K.\ Stein \cite{BehnkeStein1949}), 
any compact subset $K$ of an open Riemann surface $X$ is a Stein compact.
Let $f\in\Ascr(K,Y)$. Pick a point $p\in K$ and choose a closed parametric disc $D\subset X$
around $p$. According to the theorem of Boivin and Jiang \cite[Theorem 1]{BoivinJiang2004}
mentioned above, the assumption $\Ascr(K)=\Ocalc(K)$ implies $\Ascr(K\cap D)=\Ocalc(K\cap D)$. 
By choosing $D$ small enough, $f(K\cap D)$ lies in a  Stein domain in $Y$, 
and Lemma \ref{lem:Steinnbd} implies that $f$ is approximable uniformly on $K\cap D$
by maps that are holomorphic in neighborhoods of $K\cap D$. 
This shows that $f\in \Ocalcl(K,Y)$. Theorem \ref{th:Poletsky3.1} 
implies that the graph of $f$ over $K$ has a Stein neighborhood in $X\times Y$, 
and Lemma \ref{lem:Steinnbd} shows that $f\in \Ocalc(K,Y)$.
\end{proof}

%
%  PROOF OF MERGELYAN THEOREM
%
\begin{proof}[Proof of Theorem \ref{th:Mergelyan-Oka}]
Since $K$ has no holes in $X$, Bishop-Mergelyan theorem \cite{Bishop1958DMJ,Mergelyan1951} 
shows that $\Ascr(K)=\Ocalc(K)$. Theorem \ref{th:Mergelyan-local} implies that $\Ascr(K,Y)=\Ocalc(K,Y)$
holds for every complex manifold $Y$. Assume now that $Y$ is an Oka manifold and 
that $f\in \Ascr(K,Y)$ extends to a continuous map $f\colon X\to Y$. Let $g\colon U\to Y$ be a holomorphic 
map in a neighborhood of $K$ approximating $f$ uniformly on $K$. 
By gluing $g$ with $f$ on $U\setminus K$ we obtain a continuous map $\tilde g\colon X\to Y$ 
which agrees with $g$ in a smaller neighborhood of $K$ and is homotopic to $f$. It follows from 
Oka theory (see \cite[Theorem 5.4.4]{Forstneric2017E}) that $\tilde g|_K$ is a uniform limit 
of holomorphic maps $F\colon X\to Y$ homotopic to $\tilde g$, and hence to $f$.
\end{proof}

%
%   REMARK ON POLETSKY'S PAPER
%
\begin{remark}\label{rem:Poletsky}
Poletsky stated the following \cite[Corollary 4.4]{Poletsky2013}:

\smallskip
\noindent (*)\  {\em If  $K$ is a Stein compact in a complex manifold $X$ 
such that $\Ascr(K)$ has the Mergelyan property, then $\Ascr(K,Y)$ has 
the Mergelyan property for any complex manifold $Y$.}
\smallskip

The proof  in \cite{Poletsky2013} tacitly assumes that under assumptions of the corollary
the space $\Ascr(K,Y)$ has the local Mergelyan property, but no explanation for this is given. 
As pointed out in the proof of Theorem \ref{th:Mergelyan-local},  this holds for compact sets in 
Riemann surfaces in view of the theorem of Boivin and Jiang \cite[Theorem 1]{BoivinJiang2004}.
It is easily seen that $\Ascr(K,Y)$ has the local Mergelyan property when the set 
$K$ has $\Cscr^1$ boundary.

We refer the reader to \cite[Sect.\ 7.2]{FFW2018} for a more complete discussion of Mergelyan type
approximation theorems for manifold-valued maps.
\qed\end{remark}

%%%%%%%%%%%%%%%%%%%%%%%%%%%%%%%%%
%
%  A SPLITTING LEMMA 
%
%%%%%%%%%%%%%%%%%%%%%%%%%%%%%%%%%

\section{A splitting lemma on closed Cartan pairs in $\mathbb{C}$}\label{sec:splitting}

In this section we prepare an important technical tool that will be 
used in the proof of Theorem \ref{th:Arakelian1}; see Lemma \ref{lem:splitting}. 
We begin by introducing appropriate geometric configurations 
of closed sets in Riemann surfaces.

%
%   CARTAN PAIRS
%
\begin{definition}\label{def:Cpair}
Let $X$ be a Riemann surface. A pair of closed subsets $(A,B)$ of $X$ is a {\em Cartan pair}
if it satisfies the following two conditions.
\begin{itemize}
\item[\rm (a)] The set $K=A\cap B$ is compact. 
\item[\rm (b)]  $\overline{A\setminus B}\, \cap\, \overline {B\setminus A}=\varnothing$.
\end{itemize}
\end{definition}

The sets $A$ and $B$ in the above definition need not be compact.
This is a variation of the usual notion of a Cartan pair (see \cite[Definition 5.7.1]{Forstneric2017E}).

We denote by $T_K$ the Cauchy-Green operator with support on a compact set $K\subset \mathbb{C}$:
\begin{equation}\label{eq:TK}
	T_K(g)(z) = \frac{1}{\pi} \int_K \frac{g(\zeta)}{z-\zeta}\, du\,dv,
	\qquad z\in \mathbb{C},\ \zeta =u+\imath v.
\end{equation}
Recall that for any $g\in L^p(K)$, $p>2$, $T_K(g)$ is a bounded continuous function on $\mathbb{C}$ 
that is holomorphic on $\mathbb{C}\setminus K$, vanishes at infinity, and satisfies the uniform 
H\"older condition with exponent $\alpha=1-2/p$; furthermore, 
$T_K\colon L^p(K)\to \Cscr^\alpha(\mathbb{C})$ is a continuous
linear operator.  (See L.\ Ahlfors \cite[Lemma 1, p.\ 51]{Ahlfors2006} or A.\ Boivin and P.\ Gauthier
\cite[Lemma 1.5]{BoivinGauthier2001}.)  In particular, $T_K$ maps $\Cscr(K)$ boundedly into 
the space $\Cscr_b(\mathbb{C})$ of bounded continuous functions on $\mathbb{C}$ endowed with the 
supremum norm. Moreover, $\dibar \,T_K(g)=g$ holds in the sense of distributions, 
and in the classical sense on any open subset $\Omega\subset \mathbb{C}$ where $g$ is of class $\Cscr^1$.
(For the precise Ahlfors-Beurling estimate of $T_K$  in terms of the  area of $K$,
we recommend the paper by T.\ W.\ Gamelin and D.\ Khavinson \cite{GamelinKhavinson1989}.
Another excellent source for this topic is the book \cite{AstalaIwaniecMartin2009}
by K.\ Astala, T.\ Iwaniec and G.\ Martin; see in particular Section 4.3.) 

The following lemma provides a solution of the Cousin-I problem with bounds on 
a Cartan pair in a plane domain.

%
%
%    LEMMA ON LINEAR SPLITTING
%
%
\begin{lemma}\label{lem:linear-splitting}
Let $(A,B)$ be a Cartan pair in a domain $X\subset \mathbb{C}$ and set $K=A\cap B$. 
There exist bounded linear operators 
$\Acal \colon \Ascr(K) \to \Ascr(A)$, $\Bcal \colon  \Ascr(K) \to \Ascr(B)$, satisfying
\begin{equation}\label{eq:left-right}
        g = \Acal(g) - \Bcal (g) \quad {\rm for\ every}\ \ g\in \Ascr(K).
\end{equation}
\end{lemma}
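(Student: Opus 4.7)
I would follow the classical $\dibar$-method for Cousin-I problems: build a continuous but non-holomorphic splitting of $g$ off a cutoff adapted to the pair $(A,B)$, and correct the distributional $\dibar$-defect using the Cauchy-Green operator from~\eqref{eq:TK}. The first step is to fix a bounded linear extension $E\colon\Ascr(K)\to\Cscr(X)$ supplied by Dugundji's theorem, followed by multiplication by a single smooth compactly supported cutoff $\psi$ equal to $1$ on a neighborhood of $K$; this arranges that every $\tilde g := E(g)$ is continuous on $X$, restricts to $g$ on $K$, and has support in a fixed compact set $L\subset X$. The Cartan pair hypothesis then allows me to pick $\chi\in\Cscr^\infty(X,[0,1])$ with $\chi\equiv 0$ on a neighborhood $W_0$ of $\overline{A\setminus B}$ and $\chi\equiv 1$ on a neighborhood $W_1$ of $\overline{B\setminus A}$.

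\textbf{The formulas.} Setting $h:= (\di\chi/\di\bar z)\,\tilde g \in \Cscr(L)$ and $v:= T_L(h)$ in the notation of~\eqref{eq:TK}, I would define
\[
\Acal(g) \,:=\, \chi\,\tilde g \,-\, v \text{ on } A, \qquad \Bcal(g) \,:=\, (\chi-1)\,\tilde g \,-\, v \text{ on } B.
\]
Both are continuous on their domains, the difference $\Acal(g)-\Bcal(g)=\tilde g=g$ on $K$, and linearity together with the sup-norm bound $\|\Acal(g)\|_\infty, \|\Bcal(g)\|_\infty\leq C\,\|g\|_{\Ascr(K)}$ are immediate from the boundedness of $E$, of multiplication by the bounded functions $\chi,\chi-1,\di\chi/\di\bar z$, and of $T_L\colon\Cscr(L)\to\Cscr_b(\mathbb{C})$ recalled in the text.

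\textbf{Main obstacle.} What requires real care is checking that $\Acal(g)\in\Ascr(A)$ (and symmetrically $\Bcal(g)\in\Ascr(B)$), i.e.\ that $\Acal(g)$ is holomorphic in the interior $\mathring A$. A direct distributional computation shows that the $T_L$-term exactly cancels the ``wrong'' piece coming from $\dibar\chi$, leaving
\[
\frac{\di\Acal(g)}{\di\bar z} \,=\, \chi\cdot\frac{\di\tilde g}{\di\bar z} \qquad\text{as distributions on }X.
\]
The geometric heart of the matter is the inclusion $\supp\chi\cap\mathring A\subset \mathring K$: if $p\in\supp\chi\cap\mathring A$, then $p\notin W_0\supset\overline{A\setminus B}$, so $p$ admits a neighborhood in $A$ disjoint from $A\setminus B$, hence contained in $A\cap B=K$. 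Consequently, for any test function $\phi\in\mathcal{D}(\mathring A)$ the product $\chi\phi$ is supported in $\mathring K$, where $\tilde g=g$ is holomorphic and $\di\tilde g/\di\bar z=0$ distributionally; pairing yields $\langle\chi\,\di\tilde g/\di\bar z,\phi\rangle = \langle\di\tilde g/\di\bar z,\chi\phi\rangle=0$. Hence $\di\Acal(g)/\di\bar z=0$ on $\mathring A$, and Weyl's lemma produces a holomorphic representative, so $\Acal(g)\in\Ascr(A)$. The argument for $\Bcal(g)$ on $\mathring B$ is symmetric, with $\chi$ replaced by $\chi-1$ and $\overline{A\setminus B}$ by $\overline{B\setminus A}$.
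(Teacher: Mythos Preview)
Your argument is correct and follows the same strategy as the paper: choose a smooth cutoff $\chi$ adapted to the separation condition of the Cartan pair, form the naive splitting $\chi g$ and $(\chi-1)g$, and subtract the Cauchy--Green potential of the common $\dibar$-defect $(\dibar\chi)g$. The paper's version is slightly leaner: it observes directly that $\chi g$ extends continuously by zero from $K$ to $A$ (since $\chi$ vanishes near $\overline{A\setminus B}$) and that the defect $g\,\dibar\chi$ is already supported in $K$, so the Dugundji extension and the larger support set $L$ in your write-up are unnecessary---but they do no harm, and your explicit distributional verification of holomorphicity on $\mathring A$ via the inclusion $\supp\chi\cap\mathring A\subset\mathring K$ is a welcome addition that the paper leaves implicit.
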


\begin{proof}
By condition (b) in Definition \ref{def:Cpair} there is a smooth function 
$\chi \colon X \to [0,1]$ such that $\chi=0$ in a neighborhood of $\overline {A\setminus B}$ 
and $\chi=1$ in a neighborhood of $\overline{B\setminus A}$.
For any $g\in \Ascr(K)$, the product $\chi g$ extends to a continuous function on
$A$ that vanishes on $\overline {A\setminus B}$, and $(\chi-1)g$ extends to a continuous function on
$B$ that vanishes on $\overline {B\setminus A}$. Furthermore, 
\[
    	\dibar(\chi g)= \dibar((\chi-1)g)=g\, \dibar \chi
\]
is a continuous function on $K$ which is smooth on $\mathring K$
(note that $\dibar \chi(z)=\di\chi/\di\bar z$ is smooth). 
Since $\dibar \chi$ vanishes on $\overline{A\setminus B}\, \cup\, \overline {A\setminus B}$,
we see that $g\, \dibar \chi$ extends to a continuous function on 
$A\cup B$ which is smooth in the interior and supported in $K$. Set
\[
    \Acal(g) =\chi g - T_K(g\dibar \chi),\qquad  \Bcal(g) =(\chi-1)g - T_K(g\dibar \chi).
\]    
By the properties of the Cauchy-Green operator $T_K$ \eqref{eq:TK},
$\Acal$ and $\Bcal$ are sup-norm bounded linear operators $\Ascr(K)\to\Cscr(\mathbb{C})$, 
$\Acal(g)|_A\in \Ascr(A)$, $\Bcal(g)|_B \in \Ascr(B)$, and \eqref{eq:left-right} holds.
\end{proof}

%%%%%%%%%

Given a compact subset $K$ of $\mathbb{C}$ and an open set $W\subset \mathbb{C}^n$, we consider maps
$\gamma \colon K\times W\to K\times \mathbb{C}^n$ of the form
\begin{equation}\label{eq:gamma}
     \gamma(z,w)=\bigl(z,g(z,w)\bigl),\quad z\in K, \ w\in W,
\end{equation}
where $g=(g_1,\ldots,g_n)\colon K\times W\to \mathbb{C}^n$.
We say that $\gamma$ is of class  $\Ascr(K\times W)$ if $g$ is continuous
on $K\times W$ and holomorphic in $\mathring K \times W$.
Let $\Id(z,w)=(z,w)$ denote the identity map on $\mathbb{C}\times \mathbb{C}^n$.
With $\gamma$ as in \eqref{eq:gamma} we set
\[
        \dist_{K \times W}(\gamma,\Id) = \sup\bigl\{ |g(z,w)-w| : z\in K, \ w\in W\bigr\}.
\]

%
%
%  SPLITTING LEMMA FOR CLOSED CARTAN PAIRS IN PLANE DOMAINS
%
%
\begin{lemma}	\label{lem:splitting}
Let $(A,B)$ be a Cartan pair in a domain $X\subset \mathbb{C}$ (see Def.\ \ref{def:Cpair}).
Set  $K=A\cap B$. Given a bounded open convex set $0\in W\subset\mathbb{C}^n$ and a
number $r\in(0,1)$, there is a $\delta>0$ satisfying the following property.

For every map $\gamma \colon K \times W \to K \times \mathbb{C}^n$
of the form (\ref{eq:gamma}) and of class $\Ascr(K \times W)$, with
$\dist_{K \times W}(\gamma,\Id) <\delta$, there exist maps
\[
 	 \alpha_\gamma : A \times r W \to A \times \mathbb{C}^n, 
	 \qquad \beta_\gamma :  B \times r W \to B \times \mathbb{C}^n, 
\]
of the form (\ref{eq:gamma}) and of class $\Ascr(A\times rW)$ and $\Ascr(B\times rW)$,
respectively, depending smoothly on $\gamma$, such that $\alpha_{\Id}=\Id$, $\beta_{\Id}=\Id$, and
\[
   \gamma\circ\alpha_\gamma = \beta_\gamma \quad {\rm holds\ on\ } K\times rW.
\]
\end{lemma}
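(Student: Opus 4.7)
The plan is to reduce the equation $\gamma\circ\alpha_\gamma=\beta_\gamma$ to a Banach fixed-point problem, by applying Lemma \ref{lem:linear-splitting} parametrically in $w$ and closing the loop via Cauchy estimates in the $w$-variable. Writing $\gamma(z,w)=(z,w+c(z,w))$ with $c\in\Ascr(K\times W,\mathbb{C}^n)$ and $\|c\|_{K\times W}<\delta$, and seeking $\alpha_\gamma(z,w)=(z,w+a(z,w))$, $\beta_\gamma(z,w)=(z,w+b(z,w))$ with $a\in\Ascr(A\times rW,\mathbb{C}^n)$, $b\in\Ascr(B\times rW,\mathbb{C}^n)$, a direct substitution shows that the condition $\gamma\circ\alpha_\gamma=\beta_\gamma$ on $K\times rW$ is equivalent to
\[
b(z,w)=a(z,w)+c\bigl(z,w+a(z,w)\bigr).
\]

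First I would observe that the operators $\Acal,\Bcal$ of Lemma \ref{lem:linear-splitting}, applied componentwise and parametrically in $w\in rW$, extend to bounded linear operators
\[
\Acal\colon\Ascr(K\times rW,\mathbb{C}^n)\to\Ascr(A\times rW,\mathbb{C}^n),\qquad\Bcal\colon\Ascr(K\times rW,\mathbb{C}^n)\to\Ascr(B\times rW,\mathbb{C}^n),
\]
still satisfying $g=\Acal(g)-\Bcal(g)$ on $K\times rW$, with operator norm bounded by a constant $M$ independent of $w$. This is because the cutoff $\chi$ and the Cauchy--Green operator $T_K$ act only in the $z$-variable, so continuity in $(z,w)$ is inherited from $g$ and $w$-holomorphy of $\Acal(g),\Bcal(g)$ passes through parameter-dependent integration. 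Next, convexity of $W$ together with $rW\Subset W$ gives some $\rho>0$ such that $rW+\{|u|\le\rho\}\subset W$, and Cauchy's estimates yield a Lipschitz constant $L=L(r,W,n)$ with
\[
\bigl|c(z,w_1)-c(z,w_2)\bigr|\le L\delta\,|w_1-w_2|,\qquad z\in K,\ w_1,w_2\in rW,
\]
since the segment between any two points of $rW$ stays in $W$ by convexity.

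Now I would introduce the nonlinear operator
\[
T(a)(z,w)=-\Acal\bigl(c(z,w+a(z,w))\bigr)
\]
on the closed ball $\Sigma_\epsilon=\{a\in\Ascr(A\times rW,\mathbb{C}^n):\|a\|\le\epsilon\}$ with $\epsilon=M\delta$. Once $\delta$ is so small that $M\delta\le\rho$, for $a\in\Sigma_\epsilon$ we have $w+a(z,w)\in W$, the composition $c(z,w+a(z,w))$ is defined and lies in $\Ascr(K\times rW,\mathbb{C}^n)$, and $\|T(a)\|\le M\delta=\epsilon$; thus $T$ maps $\Sigma_\epsilon$ into itself. The Lipschitz bound on $c$ yields
\[
\|T(a_1)-T(a_2)\|\le ML\delta\,\|a_1-a_2\|,
\]
so shrinking $\delta$ further so that $ML\delta<1/2$ makes $T$ a strict contraction. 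The Banach fixed-point theorem produces a unique fixed point $a=a_\gamma\in\Sigma_\epsilon$; because $T=T_\gamma$ depends smoothly on $\gamma$ (through $c$) with a uniform contraction constant, $a_\gamma$ depends smoothly on $\gamma$ by the Banach-space implicit function theorem. Setting $b_\gamma(z,w)=-\Bcal\bigl(c(z,w+a_\gamma(z,w))\bigr)$, the splitting identity on $K\times rW$ gives $b_\gamma-a_\gamma=c(\cdot,\cdot+a_\gamma)$ there, which is the desired relation; $\alpha_\gamma,\beta_\gamma$ are then defined by adding the identity. For $\gamma=\Id$, $c\equiv 0$, so $T\equiv 0$ with unique fixed point $0$, giving $\alpha_{\Id}=\Id=\beta_{\Id}$.

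The only genuine obstacle is the parametric extension of Lemma \ref{lem:linear-splitting}: one must verify that $\Acal(g)(z,w)=\chi(z)g(z,w)-T_K(g(\cdot,w)\dibar\chi)(z)$ and its $\Bcal$-analogue indeed produce maps in $\Ascr(A\times rW,\mathbb{C}^n)$ and $\Ascr(B\times rW,\mathbb{C}^n)$ with a $w$-independent norm bound $M$, which is a routine check using $L^\infty$-continuity of $T_K$ and the preservation of holomorphy by parameter-dependent integration. Everything else is a standard contraction argument with smooth parameter dependence.
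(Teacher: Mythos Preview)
Your proof is correct and follows essentially the same approach as the paper: extend the linear splitting operators $\Acal,\Bcal$ of Lemma \ref{lem:linear-splitting} parametrically in $w$ (the paper makes the same observation that $\chi$ and $T_K$ act only in $z$, so $w$-holomorphy is preserved), and then solve the nonlinear equation $\gamma\circ\alpha=\beta$ by a contraction/iteration scheme. The paper does not spell out the iteration but simply refers to \cite[Proposition 5.8.1]{Forstneric2017E}, whose proof is precisely the fixed-point argument you have written out in detail.
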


\begin{proof}
This is a version of  \cite[Proposition 5.8.1]{Forstneric2017E} where the reader
can find references to the original sources. Its proof is obtained by  
following the proof of the cited result and using Lemma \ref{lem:linear-splitting}.
For the application of the latter lemma, note that the linear operator $T_K$
\eqref{eq:TK}, and hence also the operators $\Acal ,\Bcal$ in Lemma \ref{lem:linear-splitting},
can also be applied to functions $g(z,w)$ as in \eqref{eq:gamma} depending
holomorphically on a parameter $w\in W\subset \mathbb{C}^n$. Indeed, the function
$T_K(\chi g(\cdotp,w)) \in \Cscr(\mathbb{C})$ depends holomorphically on $w$ if $g$ does
since the cut-off function $\chi$ is independent of the $w$ variable.
It follows that $\Acal(g)$ and $\Bcal(g)$ in \eqref{eq:left-right} 
also depend holomorphically on $w$.
This gives the exact analogue of \cite[Lemma 5.8.2]{Forstneric2017E}
on any Cartan pair $(A,B)$ in a plane domain $X\subset\mathbb{C}$.
By using this result, Lemma \ref{lem:splitting} is obtained by following the proof of 
\cite[Proposition 5.8.1]{Forstneric2017E} step by step.
\end{proof}

The following lemma provides a continuous gluing of an approximating map
on a closed subset with the given globally defined map.

%
%    LEMMA ON CONTINUOUS GLUING
%
\begin{lemma}\label{lem:cont-gluing}
Given a compact $\Cscr^1$ manifold $Y$ with a Riemannian distance function $\dist_Y$, 
there is a number $r>0$ such that the following holds. If $E$ is a closed subset of a 
manifold $X$, $f\colon X\to Y$ is a continuous map, and $g\colon E\to Y$ 
is a continuous map satisfying $\sup_{x\in E}\dist_Y(g(x),f(x))<r$, then there is a continuous map 
$\tilde g\colon X\to Y$ that agrees with $g$ on $E$ and is homotopic to $f$.
\end{lemma}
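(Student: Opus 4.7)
The plan is to embed $Y$ as a compact $\Cscr^1$ submanifold of some Euclidean space $\R^N$ (using the $\Cscr^1$ version of Whitney's embedding theorem), and let $\pi\colon V_Y\to Y$ be a continuous retraction from an open $\rho$-neighborhood $V_Y$ of $Y$ in $\R^N$ onto $Y$, where $\rho>0$. Since $Y$ is compact, the intrinsic Riemannian distance $\dist_Y$ and the Euclidean distance from the embedding induce the same topology, so I choose $r>0$, depending only on $Y$ and the embedding, such that
\[
   \dist_Y(y_1,y_2)<r \ \Longrightarrow\ |y_1-y_2|_{\R^N}<\rho/2.
\]
This will be the constant $r$ of the lemma.

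Given data $(E,f,g)$ satisfying $\sup_{x\in E}\dist_Y(g(x),f(x))<r$, I first extend $g\colon E\to Y\subset\R^N$ to a continuous map $\tilde G\colon X\to\R^N$ by applying Tietze's extension theorem componentwise; this is legitimate because $X$, being a manifold, is metrizable, hence normal, and $E$ is closed. By the choice of $r$, on $E$ we have $|\tilde G(x)-f(x)|=|g(x)-f(x)|<\rho/2$, so the open set $V=\{x\in X:|\tilde G(x)-f(x)|<\rho/2\}$ is an open neighborhood of $E$. Using normality, I pick an open set $W$ with $E\subset W\subset\overline W\subset V$, and by Urysohn's lemma a continuous cutoff $\phi\colon X\to[0,1]$ with $\phi\equiv 1$ on $E$ and $\phi\equiv 0$ on $X\setminus W$.

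Now define, for $t\in[0,1]$,
\[
   F_t(x)=t\phi(x)\tilde G(x)+\bigl(1-t\phi(x)\bigr)f(x)\in\R^N,\qquad H_t(x)=\pi\bigl(F_t(x)\bigr).
\]
The key estimate is that $F_t(x)\in V_Y$ for every $x\in X$ and $t\in[0,1]$: since $|F_t(x)-f(x)|=t\phi(x)\,|\tilde G(x)-f(x)|$, this is bounded by $\rho/2$ on $W\subset V$ and vanishes outside $W$, so $F_t(x)$ is within $\rho/2<\rho$ of $f(x)\in Y$. Hence $H_t\colon X\to Y$ is well-defined and jointly continuous in $(x,t)$. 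Setting $\tilde g:=H_1$, we have $H_0=\pi\circ f=f$, and on $E$ the identity $\phi=1$, $\tilde G=g\in Y$ gives $\tilde g(x)=\pi(g(x))=g(x)$. Thus $\tilde g$ agrees with $g$ on $E$ and is homotopic to $f$ through $\{H_t\}$.

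There is no substantive obstacle; the only point that requires care is calibrating $r$ with the thickness $\rho$ of the tubular neighborhood so that the Euclidean straight-line interpolation between $\tilde G(x)$ and $f(x)$ never leaves $V_Y$. Once that coordination is in place, the construction is a direct Tietze–Urysohn patching composed with the tubular retraction $\pi$.
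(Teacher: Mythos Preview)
Your proof is correct and follows essentially the same approach as the paper: embed $Y$ in $\R^N$, use a tubular neighborhood retraction, extend $g$ by Tietze, blend with $f$ via a cutoff, and retract the straight-line homotopy back to $Y$. The only cosmetic differences are that you fold the cutoff and the homotopy parameter into the single formula $F_t = t\phi\,\tilde G + (1-t\phi)f$, whereas the paper first defines $h=\chi g_1+(1-\chi)f$ and then interpolates, and you explicitly calibrate $r$ against the Euclidean tube radius while the paper simply remarks that all metrics on a compact manifold are comparable.
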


\begin{proof}
We embed $Y$ as a $\Cscr^1$ submanifold of a Euclidean space $\R^N$.
There is an open tubular neighborhood $\Omega\subset \R^N$ of $Y$ 
and a retraction $\rho\colon \Omega\to Y$. Since $Y$ is compact, there is a number $r>0$ such that 
for every point $y\in Y$ the Euclidean ball $\B^N(y,r)\subset \R^N$ is contained in $\Omega$. 
Since any two distance functions on a compact manifold are comparable, 
it suffices to prove the lemma with $\dist_Y$ being the restriction of the  Euclidean distance function 
on $\R^N$ to $Y$ and the number $r$ defined above.
We consider $f$ and $g$ as maps to $\R^N$ with range contained in $Y$.
Assume that $\sup_{x\in E} |f(x)-g(x)|<r$. By Tietze's theorem, we can extend $g$ to a 
continuous map $g_1\colon X\to \R^N$. By continuity there is an open neighborhood $U\subset X$ 
of $E$ such that $\sup_{x\in U}|f(x)-g_1(x)|<r$. Let $\chi\colon X\to [0,1]$ be a continuous
function which equals $1$ on $E$ and satisfies $\supp(\chi)\subset U$.
The map $h=\chi g_1+(1-\chi)f:X\to \R^N$ then agrees with $g$ on $E$, with $f$ on $X\setminus U$,
and it satisfies $\sup_{x\in X} |f(x)-h(x)|<r$. Furthermore, $h_t=th+(1-t)f\colon X\to \R^N$ 
is a homotopy from $h=h_1$ to $f=h_0$ such that  $\sup_{x\in X} |f(x)-h_t(x)|<r$ for all for $t\in[0,1]$.
In particular, the homotopy $h_t$ has range in $\Omega$. The map $\tilde g=\rho\circ h\colon X\to Y$ 
and the homotopy $\tilde g_t=\rho\circ h_t\colon X\to Y$ for $t\in [0,1]$ then satisfy 
the conclusion of the lemma.  
\end{proof}

%%%%%%%%%%%%%%%%%%%%%%%%%%%%%%%%%
%
%  PROOF OF ARAKELIAN1
%
%%%%%%%%%%%%%%%%%%%%%%%%%%%%%%%%%

\section{Proof of Theorem \ref{th:Arakelian1}}\label{sec:proof1}

We begin by recalling a more convenient interpretation of Arakelian's condition on a closed
set $E$ in a domain $X\subset \mathbb{C}$.
For simplicity of exposition we shall consider the case $X=\mathbb{C}$, noting that the arguments
adapt easily to an arbitrary domain in $\mathbb{C}$. 

Given a closed set $E\subset \mathbb{C}$, we denote by $H_E$ the union of all its holes. 
(Recall that a hole of $E$ is a relatively compact connected component of its complement $\mathbb{C}\setminus E$.)

%
%  BOUNDED EXHAUSTION HULLS
%
\begin{definition} [Bounded exhaustion hulls property] \label{def:BEH}
A closed set $E$ in $\mathbb{C}$  has the {\em bounded exhaustion hulls property} (BEH) 
if the set $H_{E\cup \Delta}$ is bounded for every closed disc $\Delta$ in $\mathbb{C}$. 
\end{definition}

It is well known and easily seen that a closed subset $E\subset \mathbb{C}$ with connected 
complement enjoys the BEH property if and only if $\CP^1\setminus E$ is 
locally connected at $\infty=\CP^1\setminus \mathbb{C}$, i.e., $E$ is a Arakelian set.

%
%   PROOF OF ARAKELIAN1
%
\begin{proof}[Proof of Theorem \ref{th:Arakelian1}]
We follow the scheme of proof of Arakelian's theorem given by 
Rosay and Rudin \cite{RosayRudin1989} (1989), adapting it to manifold-valued maps. 

In the special case when $Y=\mathbb{C}^m$ the theorem holds by applying the original Arakelian's theorem
componentwise, using also Lemma \ref{lem:avoiding} below to obtain a map whose image
avoids a given set  $M\subset \mathbb{C}^m$ as in the statement of the theorem.

From now on we assume that $Y$ is a compact homogeneous manifold. 

Since the set $E\subset\mathbb{C}$ has connected complement and enjoys 
the BEH property, we can inductively find a sequence of closed discs
$\Delta_1\subset \Delta_2\subset \cdots \subset \bigcup_{i=1}^\infty \Delta_i=\mathbb{C}$ such that, letting 
$H_i=H_{E\cup \Delta_i}$ denote the union of holes of $E\cup \Delta_i$, we have that
$\Delta_i\cup \overline H_i \subset \mathring \Delta_{i+1}$ for $i=1,2,\ldots$. Set 
\begin{equation}\label{eq:Ei}
	E_0=E, \qquad  E_i=E\cup\Delta_i\cup H_i\ \  \text{for} \ \ i=1,2,\ldots.
\end{equation}
Note that $E_i$ is a closed Arakelian set in $\mathbb{C}$ and we have that
\[
	E_i\subset E_{i+1},\qquad  E\setminus \Delta_{i+1}=E_i\setminus \Delta_{i+1},
	\qquad  \bigcup_{i=0}^\infty E_i=\mathbb{C}. 
\]
The situation is illustrated in Fig.\ \ref{fig:Ei}, where the disc $\Delta$ will be chosen in \eqref{eq:Delta}.

%%%%%%%%%%%%%%%%%%%
%
%	FIGURE
%
%%%%%%%%%%%%%%%%%%%

\begin{figure}[ht]
\psset{unit=0.5cm} 

\begin{pspicture}(-7,-6)(7,6)

\pscircle[linecolor=Violet](0,0){6}     %   the big disc $\Delta_{i+1}$  	
\rput(4.8,4.8){$\Delta_{i+1}$}	

\pscircle[linecolor=Violet](0,0){5}     %   the second disc $\Delta$  
\rput(2.9,3.3){$\Delta$}			%    notation $\Delta$

\pscircle[linecolor=OrangeRed,fillstyle=crosshatch,hatchcolor=yellow](0,0){3.5}  %   the disc $\Delta_i$ 
\rput(1.8,2.1){$\Delta_i$}						                 	%    notation $\Delta_i$

\pscircle[linecolor=DarkBlue,fillstyle=crosshatch,hatchcolor=myblue](0,0){2}    %  the smallest disc $\Delta_{i-1}$  
\rput(0.2,0.5){$\Delta_{i-1}$}					         	          %   notation $\Delta_{i-1}$

\pscurve[linecolor=red,linewidth=1.2pt](-6.8,5)(-2,2.5)(-4,1.8)(-1.2,0.5)(-2.7,0.2)(-1.2,-0.3)(-0.5,-2.5)(0,-1)(1.5,-2.3)
(1,-0.5)(4,2)(3,0)(4.2,-0.7)(2.7,-1.5)(3.2,-1.8)(4.5,-3)(2.5,-3.6)(6.8,-5)    % The thick red curve

\rput(-4,0.4){$H_i$} \psline[linewidth=0.3pt]{->}(-4,0.8)(-3.4,1.8)
\rput(4.2,0.4){$H_i$}  \psline[linewidth=0.3pt]{->}(4.2,0.8)(3.6,1.7)
				  \psline[linewidth=0.3pt]{->}(4.2,0.1)(3.7,-0.8)

\rput(-2.2,-1.4){$H_{i-1}$} \psline[linewidth=0.3pt]{->}(-2.2,-1.1)(-2.2,0.2)
					\psline[linewidth=0.3pt]{->}(-2,-1.8)(-0.5,-2.2)

\rput(-6,4.2){$E$}\rput(6,-4.2){$E$}

\end{pspicture}
\caption{Sets in the inductive step} 
\label{fig:Ei}
\end{figure}
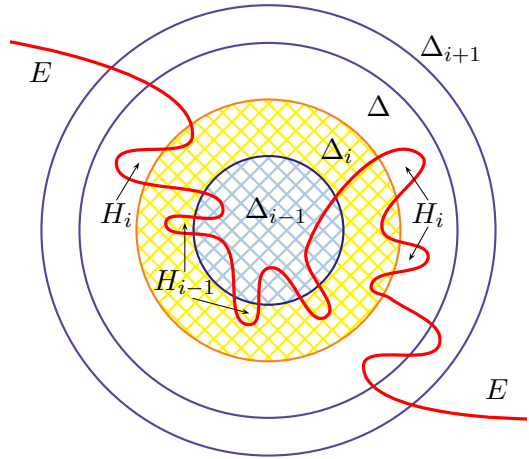
%%%%%%%%%%%%%%%%%%
%
%  END OF FIGURE
%   
%%%%%%%%%%%%%%%%%%

Let $f=f_0\colon \mathbb{C}\to Y$ be a continuous map such that $f_0|_E \in \Ascr(E)$.
Fix a Riemannian distance function $\dist_Y$ on $Y$. 
(Since $Y$ is compact, any  two such metrics are comparable to each other, so a particular
choice will not matter.) We must show that for every $\epsilon>0$ there is a 
holomorphic map $F\colon \mathbb{C}\to Y$ such that
\begin{equation}\label{eq:condF}
	\sup_{z\in E} \dist_Y(f(z),F(z)) < \epsilon.
\end{equation}
We assume that $0<\epsilon <r$, where $r>0$ is the number for which 
Lemma \ref{lem:cont-gluing} holds.

We shall inductively construct a sequence of continuous maps $f_i\colon\mathbb{C}\to Y$
$(i=1,2,\ldots)$ such that $f_i|_{E_i}\in \Ascr(E_i,Y)$ and the following estimates hold:
\begin{equation}\label{eq:inductionest}
	\dist_Y(f_i(z),f_{i-1}(z)) < 2^{-i}\epsilon, \qquad z\in E_{i-1}.
\end{equation}
Since the sets $E_i$ exhaust $\mathbb{C}$, the sequence $f_i$ converges uniformly on 
compacts in $\mathbb{C}$
to a holomorphic map $F=\lim_{i\to \infty}f_i\colon \mathbb{C}\to Y$ satisfying \eqref{eq:condF}. 

We begin with technical preparations. We shall be using 
the method of gluing holomorphic dominating fibre-sprays, suitably adapted to the case 
at hand. For general background and further details of this technique we refer 
the reader to \cite[Chapter 5]{Forstneric2017E}.

Recall that the manifold $Y$ is assumed to be complex homogeneous, i.e., $Y=\Gcal/\Hcal$ 
where $\Gcal$ is a complex Lie group and $\Hcal$ is a complex Lie subgroup of $\Gcal$. 
Let $\ggot\cong \mathbb{C}^n$ $(n=\dim \Gcal)$  denote the Lie algebra of $\Gcal$, and 
let $\exp\colon \ggot\to\Gcal$ be the associated exponential map. Note that the holomorphic map 
$s\colon Y\times \ggot  \to Y$ defined by
\begin{equation}\label{eq:spray}
	s(y,t) = \exp t \,\cdotp y, \qquad y\in Y,\ t\in \ggot,
\end{equation}
satisfies $s(y,0)=y$ and its partial differential
\begin{equation}\label{eq:domination}
	\frac{\di s(y,t)}{\di t}\bigg|_{t=0} : \ggot=\mathbb{C}^n \longrightarrow T_y Y
\end{equation}
is surjective for every point $y\in Y$.
Thus, $s$ is a {\em dominating spray} on $Y$ defined on the trivial bundle 
$Y\times \ggot\cong Y\times \mathbb{C}^n$.
(See Gromov \cite[4.6B]{Gromov1989} or \cite[p.\ 231]{Forstneric2017E} for the notion of 
dominating sprays and elliptic manifolds. These notions are briefly
recalled in Sect.\ \ref{sec:remark}.) 
By compactness of $Y$ there are positive constants $c_0>0,\ c_1>0$ such that
\begin{equation}\label{eq:c1}
	\dist_Y(y,s(y,t)) \le c_1 |t|,\qquad y\in Y,\ t \in \mathbb{C}^n, \ |t|\le c_0.
\end{equation}
Set $Y':= \mathbb{C}\times Y$ and denote its elements by $y'=(z,y)$, with $z\in\mathbb{C}$ and $y\in Y$. 
The projection 
\[
	\pi : Y'\lra \mathbb{C},\qquad \pi(z,y)=z,
\] 
is a trivial fibre bundle over $\mathbb{C}$ with fibre $Y$.
Let $\Ecal=Y'\times \mathbb{C}^n$ denote the trivial rank $n$ complex 
vector bundle over $Y'$. We identify $Y'$ with the zero section $Y'\times \{0\}^n$ 
of $\Ecal$. Given a point $y'\in Y'$ we denote by $0_{y'}\in \Ecal$ the zero element of
the fibre $\Ecal_{y'}\cong \mathbb{C}^n$. Note that for any vector bundle $\Ecal\to Y'$,
the restriction $T\Ecal|_{Y'}$ of the tangent bundle of the total space 
to the zero section $Y'$ is isomorphic to the direct sum $TY'\oplus \Ecal$.

The spray map $s$ \eqref{eq:spray} defines the {\em dominating fibre-spray} 
$\sigma\colon \Ecal\to Y'$, associated to the trivial fibration $\pi\colon Y'\to \mathbb{C}$, by 
\begin{equation}\label{eq:sigma}
	\sigma(z,y,t)=(z,s(y,t)),\qquad z\in \mathbb{C},\ y\in Y,\ t\in\mathbb{C}^n.
\end{equation}
The domination property of $\sigma$ refers to the fact that for any point $y'=(z,y)\in Y'$,
the restriction of the differential 
\begin{equation}\label{eq:difsigma}
	d\sigma_{0_{y'}}: T_{0_{y'}} \Ecal = T_{y'} Y' \oplus \Ecal_{y'} 
	\longrightarrow T_{y'} Y'
\end{equation}
to the fibre $\Ecal_{y'}=\mathbb{C}^n$ maps $\mathbb{C}^n$ surjectively 
onto the tangent space at $y'=(z,y)$ to the fibre $\pi^{-1}(z)\cong Y$ 
of the projection $\pi\colon Y'\to\mathbb{C}$ (this follows from the domination property
\eqref{eq:domination} of the spray $s$).
This restriction is called the {\em vertical derivative} of the fibre-spray $\sigma$.
For further details on dominating fibre-sprays we refer to \cite[Sect.\ 6.1]{Forstneric2017E}.

We are now ready to explain the induction step $i-1\to i$. 

Assume that $f_{i-1}\colon \mathbb{C}\to Y$ is a continuous map such that $f_{i-1}|_{E_{i-1}} \in\Ascr(E_{i-1})$.
Recall that $\Delta_i\cup \overline H_i  \subset \mathring\Delta_{i+1}$.
Pick a closed disc $\Delta$ in $\mathbb{C}$ such that 
\begin{equation}\label{eq:Delta}
	\Delta_i\cup \overline H_i\subset \Delta \subset \mathring\Delta_{i+1}.
\end{equation}
(See Fig.\ \ref{fig:Ei}.)
Since $E_i=E\cup\Delta_i\cup H_i$ (see \eqref{eq:Ei}), it follows from \eqref{eq:Delta} that 
\begin{equation}\label{eq:EminusDelta}
	E_i\setminus \Delta = E_{i-1}\setminus \Delta = E\setminus \Delta.  
\end{equation}
Since $E_{i-1}$ has no holes,  $E_{i-1} \cap \Delta_{i+1}$ has no holes either. 
As $Y$ is complex homogeneous and hence an Oka manifold, 
Theorem \ref{th:Mergelyan-Oka} (Mergelyan's theorem for maps to Oka manifolds) 
furnishes for any $c>0$ a holomorphic map $h\colon\mathbb{C}\to Y$ satisfying  
\begin{equation}\label{eq:approx-indstep}
	\dist_Y (f_{i-1}(z),h(z))  < c, \qquad z\in E_{i-1}\cap \Delta_{i+1}. 
\end{equation}
The precise value of the constant $c$ will be determined later.  

Consider the compact set
\begin{equation}\label{eq:K} 
	K=E_{i-1}\cap \overline{\Delta_{i+1}\setminus \Delta} 
	= E \cap \overline{\Delta_{i+1}\setminus \Delta},
\end{equation}
where the last identity follows from \eqref{eq:EminusDelta}.
(See Fig.\ \ref{fig:Ei}.) Since $E$ has no holes, $K$ has no holes except perhaps $\Delta$
(this happens for example if $E$ contains the disc $\Delta_{i+1}$).
Therefore, $K$ enjoys the local Mergelyan property 
(see \eqref{eq:LMP}). By Theorem \ref{th:Poletsky3.1} it follows that the graph 
\begin{equation}\label{eq:Gamma}
	\Gamma = \{(z,f_{i-1}(z)) : z\in K\}  \subset \mathbb{C}\times Y
\end{equation}
of  the map $f_{i-1}$ on $K$ admits an open Stein neighborhood $\Omega \subset Y'=\mathbb{C}\times Y$.

Consider the subset $\Ecal'$ of the trivial bundle $\Ecal=Y'\times \mathbb{C}^n$ defined by 
\begin{equation}\label{eq:Eprime}
	\Ecal' =  \left\{(z,y,t): z\in \mathbb{C},\ y\in Y,\ t\in \mathbb{C}^n,\ \frac{\di s(y,t)}{\di t}\bigg|_{t=0}=0\right\}.
\end{equation}
(We have identified $\mathbb{C}^n$ with its tangent space $T_0\mathbb{C}^n$ at the origin, 
considered as a vector subspace of $T_{0_y} (Y\times \mathbb{C}^n)$.)
Since the partial differential $\di s(y,t)/\di t|_{t=0}:\mathbb{C}^n\to T_y Y$ is surjective for every $y\in Y$
(see \eqref{eq:domination}), $\Ecal'$ is a holomorphic vector subbundle of $\Ecal$.
Since the domain $\Omega\subset Y'$ is Stein, Cartan's Theorem B shows that 
\begin{equation}\label{eq:directsum}
	\Ecal|_\Omega = \Ecal'|_\Omega \oplus \Ecal''
\end{equation}
for some holomorphic vector subbundle $\Ecal''$ of $\Ecal|_\Omega = \Omega\times \mathbb{C}^n$ 
(see \cite[Corollary 2.6.6]{Forstneric2017E}). It follows from \eqref{eq:Eprime} and \eqref{eq:directsum}
that for each point $y'=(z,y)\in\Omega$ the restriction of the differential $d\sigma_{0_{y'}}$ \eqref{eq:difsigma}
to the fibre $\Ecal''_{y'}$ maps $\Ecal''_{y'}$ isomorphically onto $T_y Y$.
We claim that, as a consequence of this and the implicit function theorem, 
there exist a smaller neighborhood $\Omega'\subset \Omega$
of $\Gamma$ in $\mathbb{C}\times Y$, a neighborhood $0\in W\subset \mathbb{C}^n$, and a holomorphic map 
\[
	G : Z= \bigl\{(z,y_1,y_2,t): (z,y_1), (z,y_2)\in \Omega',\ t\in W\bigr\} \lra \mathbb{C}^n
\]
satisfying
\begin{equation}\label{eq:Gtrans}
	\sigma(z,y_1,t) = \sigma\bigl(z,y_2,G(z,y_1,y_2,t)\bigr),\qquad (z,y_1,y_2,t)\in Z,
\end{equation}
and
\begin{equation}\label{eq:Gid}
	G(z,y,y,t)  = t,\qquad (z,y)\in \Omega',\ \ t\in W.
\end{equation}
Let us explain the construction of $G$. 
Given $t\in\mathbb{C}^n$ and a point $y'=(z,y)\in \Omega$ we let
\[
	 t = t'_{y'}\oplus t''_{y'} \in \Ecal'_{y'} \oplus \Ecal''_{y'}
\]
be the decomposition corresponding to the holomorphic direct sum \eqref{eq:directsum}. 
Choosing the neighborhoods $\Omega'$ and $W$ as above and small enough, 
the implicit function theorem furnishes a unique map $G$  of the form
\[
	G(z,y_1,y_2,t) = t'_{(z,y_2)} \oplus G''(z,y_1,y_2,t)  \in 
	\Ecal'_{(z,y_2)} \oplus \Ecal''_{(z,y_2)} = \mathbb{C}^n
\]
satisfying conditions \eqref{eq:Gtrans} and \eqref{eq:Gid}.

Recall that $h\colon\mathbb{C}\to Y$ is a holomorphic map satisfying condition \eqref{eq:approx-indstep}.
Choosing the constant $c>0$ in \eqref{eq:approx-indstep} small enough and inserting the values
$y_1=f_{i-1}(z)$ and $y_2=h(z)$  $(z\in K)$ into the map $G$, we obtain the map
\begin{equation}\label{eq:map-g}
	g(z,t) := G(z,f_{i-1}(z),h(z),t)\in\mathbb{C}^n, \qquad z\in K,\ t\in W,
\end{equation}
of class $\Ascr(K\times W)$ which, in view of \eqref{eq:Gtrans}, satisfies the condition
\begin{equation}\label{eq:main1}
	\sigma(z,f_{i-1}(z),t) = \sigma(z,h(z),g(z,t)),\qquad z\in K,\ t\in W.
\end{equation}

Consider the Cartan pair decomposition $(A,B)$ of $E_i=A\cup B$ defined by  
\[
	A=  \overline{E_i\setminus \Delta} = E\setminus \mathring \Delta,
	\qquad 
	B = E_i\cap \Delta_{i+1}
\]
(See Definition \ref{def:Cpair} and Figure \ref{fig:Ei}.)  
From \eqref{eq:EminusDelta} and \eqref{eq:K} we see that
\[
	A\cap B = E_i \cap (\Delta_{i+1} \setminus \mathring \Delta) = 
	E \cap (\Delta_{i+1} \setminus \mathring \Delta)  = K.
\]
Pick a number $0<r_0<1$. Assuming as we may that the holomorphic map
$h\colon\mathbb{C}\to Y$ is sufficiently uniformly close to $f_{i-1}$ on $K$ (see \eqref{eq:approx-indstep}) 
and $g$ is given by \eqref{eq:map-g}, the map $\gamma\colon K\times W\to K\times \mathbb{C}^n$ 
of class $\Ascr(K\times W)$, defined by
\begin{equation}\label{eq:main2}
	\gamma(z,t)= (z,g(z,t)),  \qquad z\in K,\ t\in W,
\end{equation}
is close to the identity map on $K\times W$ in view of \eqref{eq:Gid}, with $\dist_Y(\gamma,\Id)$
depending on the constant $c>0$ in \eqref{eq:approx-indstep}.
By choosing $c$ small enough, Lemma \ref{lem:splitting} furnishes maps
\begin{equation}\label{eq:main3}
\begin{aligned}
	\alpha(z,t) &= (z,a(z,t)),\qquad z\in A,\ t\in r_0 W, \\
	\beta(z,t)   &= (z,b(z,t)), \qquad z\in B,\ t\in r_0 W,
 \end{aligned}
\end{equation}
of class $\Ascr(A\times r_0W)$ and $\Ascr(B\times r_0 W)$, respectively, uniformly close to the identity 
on their respective domains (depending on the constant $c>0$ in \eqref{eq:approx-indstep}) and satisfying 
\begin{equation}\label{eq:main4}
	\gamma\circ \alpha=\beta\quad \text{on}\ \ K\times r_0 W.
\end{equation}
From \eqref{eq:sigma}, \eqref{eq:main1}, \eqref{eq:main2}, \eqref{eq:main3}, 
and \eqref{eq:main4} it follows that
\[
	s\bigl(f_{i-1}(z),a(z,t)\bigr) = s\bigl(h(z),b(z,t)\bigr) \in Y, \qquad z\in K,\ t\in r_0 W.
\]
The two sides of the above equation define a map $f_i\colon E_i\to Y$ of class $\Ascr(E_i)$
given by
\[
	f_i(z) = 
	\begin{cases}
		s\bigl(f_{i-1}(z),a(z,0)\bigr)   & \text{if $z\in A$}, \\
		 s\bigl(h(z),b(z,0)\bigr)	    & \text{if $z\in B$}.
	\end{cases}
\] 
From \eqref{eq:c1} and the construction of $f_i$ we see that the estimate \eqref{eq:inductionest} 
holds provided the constant $c>0$ in \eqref{eq:approx-indstep} is chosen small enough.

Finally, applying Lemma \ref{lem:cont-gluing} we may extend $f_i$ from $E_i$ to a continuous
map $f_i\colon \mathbb{C}\to Y$. This concludes the induction step and proves the theorem for maps to any compact complex homogenous manifold $Y$.
For the last part, we need the following lemma due to
J.\ Winkelmann  \cite[Proposition 2.1]{Winkelmann1998}.

%
%  WINKELMANN'S LEMMA
%
\begin{lemma}\label{lem:avoiding}
Assume that $Y=\mathbb{C}^m$, or that $Y$ is a compact homogenous manifold of dimension $m$.
Let $M$ be a closed subset of $Y$ of Hausdorff dimension $<2m-2k$ for some $k=1,\ldots,m-1$.
Then for any complex manifold $X$ of dimension $k$, every holomorphic map $X\to Y$ 
can be uniformly approximated by holomorphic maps $X\to Y\setminus M$.
\end{lemma}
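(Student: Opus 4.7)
The plan is to exhibit a holomorphic family of deformations $\{f_p\}_{p\in P}$ of $f$ parametrized by a complex manifold $P$ with a basepoint $p_0$ satisfying $f_{p_0}=f$, such that the total map $F\colon X\times P\to Y$, $F(x,p):=f_p(x)$, is a submersion onto $Y$. A Hausdorff-dimension calculation then forces $f_p(X)\cap M=\varnothing$ for parameters $p$ arbitrarily close to $p_0$.

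For the setup, when $Y=\mathbb{C}^m$ I would take $P=\mathbb{C}^m$, $p_0=0$, and $f_v(x):=f(x)+v$; here $\partial F/\partial v=\Id$, so $F$ is trivially a submersion. When $Y=\Gcal/\Hcal$ is compact homogeneous with $\Gcal$ a complex Lie group acting transitively on $Y$, I would take $P=\Gcal$, $p_0=e$, and $f_g(x):=g\cdot f(x)$. Transitivity of the action means that for each fixed $x$ the orbit map $g\mapsto g\cdot f(x)$ is a submersion $\Gcal\to Y$, so $F$ is a submersion in either case.

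Now comes the dimension count. Since $F$ is a holomorphic submersion onto $Y$, its fibres are complex submanifolds of $X\times P$ of pure real dimension $2k+2\dim_\mathbb{C} P-2m$. The standard Fubini-type estimate for Hausdorff dimension under submersions (which reduces locally, via straightening coordinates, to the product inequality $\dim_H(A\times B)\le\dim_H A+\dim_H B$) yields
\[
\dim_H F^{-1}(M)\le\dim_H M+2k+2\dim_\mathbb{C} P-2m<2\dim_\mathbb{C} P,
\]
using the hypothesis $\dim_H M<2m-2k$. Let $\pi\colon X\times P\to P$ denote the projection. Since $\pi$ is locally Lipschitz, $\dim_H\pi(F^{-1}(M))\le\dim_H F^{-1}(M)<2\dim_\mathbb{C} P=\dim_\R P$, so the set $S:=\pi(F^{-1}(M))\subset P$ has Lebesgue measure zero; in particular, its complement is dense near $p_0$. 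Any $p\in P\setminus S$ close enough to $p_0$ gives a holomorphic map $f_p\colon X\to Y$ whose image misses $M$.

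Finally, $f_p\to f_{p_0}=f$ uniformly on $X$ as $p\to p_0$: in the translation case $\sup_X|f_v-f|=|v|$, while in the compact homogeneous case this follows from uniform continuity of the $\Gcal$-action on the compact manifold $Y$ (giving $\dist_Y(g\cdot y,y)\le C\,\dist_\Gcal(g,e)$ for $g$ near $e$, uniformly in $y\in Y$). The main technical point I would want to verify carefully is the Hausdorff-dimension estimate under the submersion $F$: intuitively clear since submersions locally look like projections and thus preimages pick up the fibre dimension, it nevertheless requires either coordinate straightening plus the product inequality or an appeal to a general slicing/coarea-type statement.
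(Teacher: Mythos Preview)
Your argument is correct and takes essentially the same route as the paper: both build a submersive holomorphic family of deformations of $f$ (you via the group action $g\cdot f$, the paper via the dominating spray $s(f(x),t)=\exp(t)\cdot f(x)$, which is the same thing in exponential coordinates near the identity) and then argue by dimension that a generic parameter yields a map missing $M$, with the paper citing Abraham's transversality theorem where you spell out the Hausdorff-dimension count. Regarding the technical point you flag: since the fibres of $F$ are smooth manifolds, the product inequality $\dim_H(A\times B)\le\dim_H A+\overline{\dim_B}B$ applies with $B$ the fibre (so $\overline{\dim_B}B=\dim_H B$), and countably many local straightening charts suffice, so the estimate goes through.
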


\begin{proof}
Let $s\colon \Ecal=Y\times \mathbb{C}^n \to Y$ be a dominating spray of the form \eqref{eq:spray} on $Y$
for some $n\ge m$. (If $Y=\mathbb{C}^m$, we may use the spray 
$\Ecal = \mathbb{C}^m\times \mathbb{C}^m\to\mathbb{C}^m$, $s(z,t)=z+t$.)
Given a holomorphic map $f\colon X\to Y$, we consider the map $\tilde f\colon X\times \mathbb{C}^n\to Y$ 
defined by 
\[
	\tilde f(x,t)= s(f(x),t)\in Y,\qquad x\in X,\ t\in \mathbb{C}^n.
\]
By compactness of $Y$ and the domination property of the spray $s$
there is a ball $W\subset \mathbb{C}^n$ around the origin
such that the partial differential of $\tilde f$ with respect to the variable $t$ is surjective at all
points $(x,t)\in X\times W$. It follows that for a generic $t\in W$
the map $f_t=\tilde f(\cdotp,t)\colon X\to Y$ misses $M$ by dimension reasons 
(see R.\ Abraham \cite{Abraham1963}). By choosing $t$ close to $0$, $f_t$ 
approximates $f$ uniformly on $X$.  This proves the lemma.
\end{proof}

\vspace{-2mm}
This completes the proof of Theorem \ref{th:Arakelian1}.
\end{proof}

%%%%%%%%%%%%%%%%%%%%%%
%
%	CONCEPTUALISATION OF THE PROOF
%
%%%%%%%%%%%%%%%%%%%%%%
\section{Arakelian's theorem on more general open Riemann surfaces}\label{sec:Arakelian2}

We now inspect and conceptualise the above proof 
to see under which conditions on an open Riemann surface $X$ and
a closed subset $E\subset X$ does Theorem \ref{th:Arakelian1} hold.

Given a compact set $K$ in a complex manifold $X$, its $\Ocal(X)$-convex hull is 
\[
	\wh K=\{z\in X: |f(z)|\le \sup_K |f| \ \text{for all}\ f\in \Ocal(X)\}.
\]
The set $K$ is said to be $\Ocal(X)$-convex if $K=\wh K$.
If $X$ is an open Riemann surface then $\wh K$ is the union of $K$ with all its holes
(see H.\ Behnke and K.\ Stein \cite{BehnkeStein1949}).
Similarly, one defines the $\Ocal(X)$-convex hull of a closed 
subset $E\subset X$ by $\wh E= \bigcup_i \wh{E_i}$ for any
normal exhaustion $E_1\subset E_2\subset \cdots \subset \bigcup_{i=1}^\infty E_i=E$
by compact subsets; the result is independent of the choice of exhaustion.
(See \cite[Definition 2.1]{ManneWoldOvrelid2011} or \cite[Sect.\ 2.1]{Chenoweth2018}.)
We also define 
\begin{equation}\label{eq:hE}
	h(E) = \overline {\wh{E} \setminus E}.
\end{equation}
In an open Riemann surface $X$, $h(E)$ is the closure of the union of all holes of $E$. 

The following notion is due to P.\ Manne, E.\ F.\ Wold, and N.\ {\O}vrelid 
\cite[Definition 2.1]{ManneWoldOvrelid2011}; see also 
\cite{MagnussonWold2016} and \cite[Sect.\ 2.1]{Chenoweth2018}.

%%%%%%%%%%%%%%%%%
%
%  BEH - STEIN MANIFOLDS
%
%%%%%%%%%%%%%%%%%

\begin{definition}\label{def:BEH2}
A closed subset $E$ of a Stein manifold $X$ is said to have {\em bounded exhaustion hulls property} 
if there is a normal exhaustion $K_1\subset K_2\subset \cdots \subset  \bigcup_i K_i=X$
by compact $\Ocal(X)$-convex sets such that the set $h(E\cup K_i)$
is compact for every $i=1,2,\ldots$.
\end{definition}

For closed sets in $X=\mathbb{C}$, this coincides with Definition \ref{def:BEH}.
Just as in this special case, a closed set $E$ in an open Riemann surface $X$ is an Arakelian set
(i.e., the complement of $E$ in the one point compactification of $X$ is connected and locally connected)
if and only if $\wh E=E$ and $E$ has bounded exhaustion hulls property.

Given a compact set $K$ in an open Riemann surface $X$, 
we denote by $\Cscr^{0,1}(K)$ the space of $(0,1)$-forms
with continuous coefficients on $K$, endowed with the supremum norm. 
Choose a nowhere vanishing holomorphic $1$-form 
$\theta$ on $X$ (see \cite[Theorem 5.3.1 (c)]{Forstneric2017E}). Then, the map
$\Cscr(K) \ni g \mapsto g \bar\theta \in \Cscr^{0,1}(K)$ is an isomorphism of Banach space.

We now introduce the key analytic condition used in our proof of Arakelian's theorem.
We denote by $\Cscr_b(X)$ the Banach space of bounded continuous functions
on $X$ endowed with the supremum norm.

%%%%%%%%%%
%
%   CONDITION BH
%
%%%%%%%%%%
\begin{definition}\label{def:CondBH}
An open Riemann surface $X$ satisfies {\em Condition BH} % (bounded holomorphic)
if for every compact set $K$ in $X$ there is a bounded linear operator 
$T_K:\Cscr^{0,1}(K)\to \Cscr_b(X)$ satisfying the following two conditions.
\begin{enumerate}
\item For every $g\in \Cscr^{0,1}(K)$ we have $\dibar\, T_K(g)=g$ in the sense of distributions,
where we take $g=0$ on $X\setminus K$ (so the function $T_K(g)$ is holomorphic there).
\item $T_K$ is a {\em holomorphic operator} in the following sense:
if $g(\cdotp,w)\in \Cscr^{0,1}(K)$ is a family depending 
holomorphically on a parameter $w\in W\subset \mathbb{C}^n$, then 
$T_K(g(\cdotp,w)) \in \Cscr_b(X)$ also depends holomorphically on $w$.
\end{enumerate}
\end{definition}

The acronym BH  in the above definition stands for {\em bounded holomorphic}, the two 
key properties of operators $T_K$. On any domain $X\subset \mathbb{C}$ the Cauchy-Green operator 
$T_K$ \eqref{eq:TK} satisfies these and even stronger conditions as discussed
in the Introduction. We shall consider further examples on more general
Riemann surface below.

%%%%%%%%%%%%%%%%%%%%%%
%
%	GENERALISED ARAKELIAN THEOREM
%
%%%%%%%%%%%%%%%%%%%%%%
\begin{theorem}% [Arakelian theorem for Riemann surfaces satisfying Condition BH] 
\label{th:Arakelian2}
Let $X$ be an open Riemann surface satisfying Condition BH
(see Definition \ref{def:CondBH}), and let $E\subset X$
be a closed Arakelian set. If $Y$ is a compact complex homogeneous manifold, 
then every continuous map $X\to Y$ which is holomorphic in $\mathring E$ 
can be approximated uniformly on $E$ by holomorphic maps $X\to Y$.
\end{theorem}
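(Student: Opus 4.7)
The plan is to follow the proof of Theorem \ref{th:Arakelian1} step by step, replacing two ingredients that were specific to $\mathbb{C}$: the disc exhaustion of the plane and the Cauchy-Green operator. The disc exhaustion is replaced by an exhaustion built from the $\Ocal(X)$-convex exhaustion of $X$ together with the BEH property of the Arakelian set $E$; the Cauchy-Green operator $T_K$ is replaced by the operator supplied by Condition BH. Since $Y$ is a compact complex homogeneous manifold, the dominating fibre-spray $\sigma$ of \eqref{eq:sigma}, its vertical derivative decomposition, and the implicit function construction of the transition map $G$ all carry over without change. The gluing via Lemma \ref{lem:cont-gluing} and the Oka-Mergelyan Theorem \ref{th:Mergelyan-Oka} are already stated for arbitrary open Riemann surfaces.

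First, fix a nowhere vanishing holomorphic $1$-form $\theta$ on $X$ (available by \cite[Theorem 5.3.1 (c)]{Forstneric2017E}) and choose a normal exhaustion $K_1\subset K_2\subset\cdots\subset X$ by $\Ocal(X)$-convex compact sets witnessing the bounded exhaustion hulls property for $E$. After passing to a subsequence, I may assume $\wh{E\cup K_i}\subset \mathring K_{i+1}$. Set $E_0=E$ and $E_i=\wh{E\cup K_i}$ for $i\ge 1$. Then each $E_i$ is a closed Arakelian set, $E_{i-1}\subset \mathring E_i$, $\bigcup_i E_i=X$, and $E_i\setminus K_{i+1}=E\setminus K_{i+1}$, exactly as in \eqref{eq:Ei}.

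Second, I would upgrade the splitting lemmas of Sect.\ \ref{sec:splitting} to arbitrary Riemann surfaces satisfying Condition BH. The proof of Lemma \ref{lem:linear-splitting} only uses that the operator $T_K$ is bounded, is a right inverse to $\bar\partial$ in the distributional sense, and preserves holomorphic dependence on external parameters. Writing a cut-off function $\chi$ that separates $\overline{A\setminus B}$ and $\overline{B\setminus A}$, the form $g\,\dibar\chi$ is a continuous $(0,1)$-form supported in $K=A\cap B$, which after dividing by $\bar\theta$ becomes a function in $\Cscr(K)$ to which the BH operator applies. The operators
\[
\Acal(g)=\chi g - T_K(g\,\dibar\chi),\qquad \Bcal(g)=(\chi-1)g - T_K(g\,\dibar\chi)
\]
then map $\Ascr(K)$ boundedly into $\Ascr(A)$ and $\Ascr(B)$, and the holomorphic-parameter property of $T_K$ yields the parametric version needed in Lemma \ref{lem:splitting}. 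This gives a splitting lemma on every Cartan pair $(A,B)$ in $X$, with the same quantitative statement as Lemma \ref{lem:splitting}.

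Third, I would execute the induction step exactly as in Sect.\ \ref{sec:proof1}. Interpose an $\Ocal(X)$-convex compact set $L$ with $E_i\cap K_i\subset \mathring L\subset L\subset \mathring K_{i+1}$ to play the role of the disc $\Delta$ in \eqref{eq:Delta}. Apply Theorem \ref{th:Mergelyan-Oka} to the set $E_{i-1}\cap K_{i+1}$ (which is $\Ocal(X)$-convex, hence without holes in $X$) to obtain a global holomorphic map $h\colon X\to Y$ approximating $f_{i-1}$ there. Form the Cartan pair $A=\overline{E_i\setminus L}$, $B=E_i\cap K_{i+1}$ with $K=A\cap B=E\cap \overline{K_{i+1}\setminus L}$, define the transition data $g(z,t)=G(z,f_{i-1}(z),h(z),t)$ on $K\times W$ using the Stein neighborhood of the graph of $f_{i-1}|_K$ supplied by Theorem \ref{th:Poletsky3.1}, split via the generalized splitting lemma, and glue through the dominating spray \eqref{eq:spray} to produce $f_i\in \Ascr(E_i,Y)$. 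Extend continuously to $X$ by Lemma \ref{lem:cont-gluing}; the telescoping estimates \eqref{eq:inductionest} yield the desired limit $F=\lim_i f_i\colon X\to Y$.

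The main obstacle is ensuring that the compact set $K=E\cap \overline{K_{i+1}\setminus L}$ has the local Mergelyan property, since this is what allows Theorem \ref{th:Poletsky3.1} to produce the Stein neighborhood $\Omega$ of the graph $\Gamma$ needed to set up the fibre-spray transition map $G$. In the planar case this was automatic because any hole of $K$ had to lie inside $\Delta$, and the same argument works here provided $L$ is chosen large enough to absorb all holes of $E\cup\overline{K_{i+1}\setminus L}$ into $\mathring L$; this is precisely where the BEH property and the $\Ocal(X)$-convexity of the exhaustion $\{K_i\}$ are used. Once this geometric point is settled, the rest of the argument transcribes from Sect.\ \ref{sec:proof1} without further complication.
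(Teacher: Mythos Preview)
Your proposal is correct and follows essentially the same approach as the paper, whose proof is itself very terse: it constructs the exhaustion \eqref{eq:Ei2} by smoothly bounded $\Ocal(X)$-convex compacts $\Delta_i$, observes that Condition BH makes Lemmas \ref{lem:linear-splitting} and \ref{lem:splitting} go through verbatim on $X$, and then simply refers back to the inductive construction in the proof of Theorem \ref{th:Arakelian1}. Your write-up actually supplies more detail than the paper does. One harmless slip: the inclusion $E_{i-1}\subset \mathring E_i$ you list is false in general (boundary points of $E$ far from $K_i$ lie on $bE_{i-1}\cap bE_i$), but you never use it.
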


\begin{proof}
The assumption that $E$ is Arakelian means that $E$ is $\Ocal(X)$-convex 
and has bounded exhaustion hulls property 
(see Definition \ref{def:BEH2}). For any compact set $K\subset X$, the set 
$h(E\cup K)$ (see \eqref{eq:hE}) is compact, and  the hull 
$\wh{E\cup K}=E\cup K\cup  h(E\cup K)$ is closed and 
$\Ocal(X)$-convex (see \cite[Lemma 3]{Chenoweth2018}).
Hence, we can find a sequence of compact, smoothly bounded, $\Ocal(X)$-convex subsets 
$\Delta_1\subset \Delta_2\subset \cdots \subset \bigcup_{i=1}^\infty \Delta_i=X$ such that, 
letting $H_i = h(E\cup \Delta_i)$ (see \eqref{eq:hE}),  
we have $\Delta_i\cup H_i \subset \mathring \Delta_{i+1}$ for all $i\in\N$. 
As in \eqref{eq:Ei} we set
\begin{equation}\label{eq:Ei2}
	E_0=E,\qquad E_i=E\cup\Delta_i\cup H_i = \wh{E\cup\Delta_i}, \quad i=1,2,\ldots.
\end{equation}
We now apply the inductive construction in the proof of Theorem \ref{th:Arakelian1}. 
Condition BH ensures that Lemma \ref{lem:linear-splitting} remains valid, with the same proof.
The same holds for Lemma \ref{lem:splitting} whose proof 
uses Lemma \ref{lem:linear-splitting} and the assumption that the operator $T_K$ is holomorphic
in the sense of condition (2) in Definition \ref{def:CondBH}. 
It remains to follow the proof of Theorem \ref{th:Arakelian1}. % without any changes.
\end{proof}

%
%  CAUCHY KERNELS ON RIEMANN SURFACES
%
Natural operators that may satisfy Condition BH % (see Definition \ref{def:CondBH})
are given by Cauchy kernels. H.\ Behnke and K.\ Stein \cite[Theorem 3]{BehnkeStein1949}
constructed Cauchy kernels, also called \emph{elementary differentials}, on 
any relatively compact domain in an open Riemann surface $X$.  
(See also H.\ Behnke and F.\ Sommer \cite[p.\ 584]{BehnkeSommer1962}.)
Globally defined Cauchy kernels were constructed 
by S.\ Scheinberg \cite{Scheinberg1978} and P.\ M.\ Gauthier \cite{Gauthier1979} in 1978--79. 
A Cauchy kernel is a meromorphic 1-form $\omega$ on 
$X\times X$ which is holomorphic off the diagonal and has the form
\[ 
	\omega(z,\zeta) = \left(\frac{1}{\zeta - z} + h(z,\zeta)\right) d\zeta
\] 
in any pair of local holomorphic coordinates on $X$, where $h$ is a holomorphic function. 
In particular, $\omega$ has simple poles with 
residues one along the diagonal of $X\times X$ and no other poles.  
For any relatively compact domain $\Omega\subset X$ with piecewise $\Cscr^1$ boundary 
and function $f\in\Cscr^1(\overline\Omega)$ one has the generalised Cauchy-Green formula:
\[ 
	f(z) = \frac{1}{2\pi \imath}\int_{\partial\Omega} f(\zeta)\, \omega(z,\zeta) 
	- \frac{1}{2\pi \imath} \int_{\Omega}\overline\partial f(\zeta)\wedge\omega(z,\zeta),
	\quad z\in \Omega.
\] 
Given a compact subset $K\subset X$, the linear operator $T_K:\Cscr^{0,1}(K)\to \Cscr(X)$, 
\[
	T_K(g)(z) = - \frac{1}{2\pi \imath} \int_{\zeta \in K} 
	g(\zeta) \wedge\omega(z,\zeta),\quad g\in \Cscr^{0,1}(K),\ z\in X
\]
satisfies condition (2) in Definition \ref{def:CondBH}, 
and it also satisfies condition (1) on any relatively compact domain in $X$ 
containing $K$. When $X=\mathbb{C}$ and $\omega(z,\zeta) = \frac{d\zeta}{\zeta - z}$, 
$T_K$ is the classical Cauchy-Green operator \eqref{eq:TK}.
This gives the following corollary to Theorem \ref{th:Arakelian2}.

\begin{corollary}\label{cor:Arakelian2}
If $X$ is a relatively compact domain in an open Riemann surface $R$ 
then Arakelian's theorem (the conclusion of Theorem \ref{th:Arakelian2}) holds
for maps from $X$ to an arbitrary compact complex homogeneous manifold.
\end{corollary}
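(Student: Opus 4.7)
The plan is to verify Condition BH for $X$ and then invoke Theorem \ref{th:Arakelian2}. To produce the operators $T_K$ required by Definition \ref{def:CondBH}, I would take a global Cauchy kernel $\omega$ on the open Riemann surface $R$, which exists by Scheinberg and Gauthier as recalled in the paragraph immediately preceding the corollary, and define, for each compact $K\subset X$,
\[
T_K(g)(z) = -\frac{1}{2\pi\imath}\int_{\zeta\in K} g(\zeta)\wedge \omega(z,\zeta), \qquad z\in X,\ g\in \Cscr^{0,1}(K).
\]

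The work then splits into checking three properties of $T_K$: mapping into $\Cscr_b(X)$, the identity $\dibar\,T_K(g)=g$ on $X$ with $g$ extended by zero outside $K$, and the holomorphic parameter dependence required by Definition \ref{def:CondBH}(2). Continuity of $T_K(g)$ across $K$ and the $\dibar$-equation are obtained from the integrable $1/(\zeta-z)$ singularity of $\omega$ along the diagonal together with the generalised Cauchy-Green formula quoted just before the corollary; these steps are formally identical to those for the classical Cauchy-Green operator \eqref{eq:TK} on a plane domain. Holomorphic parameter dependence is equally routine: if $g(\cdotp,w)$ depends holomorphically on $w\in W\subset \mathbb{C}^n$, then for $z\in X\setminus K$ differentiation under the integral gives holomorphy of $w\mapsto T_K(g(\cdotp,w))(z)$, and extension of holomorphy in $w$ across $z\in K$ follows from uniform convergence using the same singular-integral estimates used for boundedness.

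The single nontrivial step, which I expect to be the main obstacle, is the uniform bound $\sup_{z\in X}|T_K(g)(z)|\le C_K\|g\|_\infty$. This is precisely where the hypothesis that $X$ be relatively compact in $R$ is used: since $\overline X$ is a compact subset of $R$, the kernel $\omega(z,\zeta)$ restricted to $\overline X\times K$ decomposes in local holomorphic coordinates into a continuous bounded piece and a piece of size $O(|\zeta-z|^{-1})$, and the latter has uniformly bounded area integral over $K$ as $z$ varies over the compact set $\overline X$. Without the relative compactness of $X$ in $R$ one could not control the behaviour of $\omega(z,\zeta)$ as $z$ approaches the ideal boundary of $R$, and $T_K$ could fail to land in $\Cscr_b(X)$. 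Once Condition BH is established, Theorem \ref{th:Arakelian2} applies directly and yields the Arakelian-type approximation asserted by the corollary.
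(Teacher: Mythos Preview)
Your proposal is correct and follows essentially the same approach as the paper: the paper's argument for the corollary is precisely the paragraph preceding it, where the Cauchy kernel $\omega$ on $R$ is used to define $T_K$, and the relative compactness of $X$ in $R$ is what guarantees that $T_K$ lands in $\Cscr_b(X)$ so that Condition BH holds and Theorem \ref{th:Arakelian2} applies. Your identification of the boundedness step as the place where the hypothesis $X\Subset R$ enters is exactly right.
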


The corollary applies in particular to any bordered Riemann surface and,
more generally, to any open Riemann surface $X$ of finite topological type
(i.e., of finite genus and finitely many ends) such that at least one end of $X$ is not a puncture. 
By a theorem of  E.\ L.\ Stout \cite{Stout1965}, such Riemann surface $X$ is biholomorphic 
to a relatively compact domain in another open Riemann surface.

%%%%%%%%%%%%%%%%%%%%%%%%
%
%  A REMARK ON ELLIPTIC MANIFOLDS
%
%%%%%%%%%%%%%%%%%%%%%%%%

\section{A remark on elliptic manifolds}\label{sec:remark}

We recall the following notion due to M.\ Gromov \cite{Gromov1989}; 
see also \cite[Definition 5.6.13]{Forstneric2017E}.

%
%    ELLIPTIC MANIFOLDS
%
\begin{definition}\label{def:elliptic}
A complex manifold $Y$ is {\em elliptic} if there is a triple  $(\Ecal,\pi,s)$, 
where $\pi \colon \Ecal\to Y$ is a holomorphic vector bundle (the {\em spray bundle})
and $s\colon \Ecal\to Y$ is a holomorphic map (the {\em spray map}), 
such that for every point $y\in Y$ we have that $s(0_y)=y$ and the differential 
$ds_{0_y}\colon T_{0_y}\Ecal \to T _y Y$ maps the vertical subspace $\Ecal_y=\pi^{-1}(y)$ of 
the tangent space $T_{0_y}\Ecal$ surjectively onto $T_y Y$.
(Here, $0_y$ denotes the zero element of $\Ecal_y$ and we identified $T_{0_y} \Ecal_y$ 
with $\Ecal_y$.) Such $(\Ecal,\pi,s)$ is called a {\em dominating (holomorphic) spray} on $Y$.

The manifold $Y$ is {\em special elliptic} if it admits a dominating spray defined
on a trivial bundle $\Ecal=Y\times \mathbb{C}^n$ for some $n\ge \dim Y$.  
\end{definition}

Given a dominating spray $s\colon \Ecal\to Y$, we denote by 
\begin{equation}\label{eq:VD}
	Ds(y) = ds_{0_y}|_{ \Ecal_y} : \Ecal_y\lra T_y Y,\qquad y\in Y,
\end{equation}
the {\em vertical derivative} of $s$ at the point $0_y=y\times \{0\}^n$; this is the restriction
of the differential $ds_{0_y}:T_{0_y}\Ecal\to T_yY$ to the vertical subspace 
$T_{0_y} \Ecal_y \cong \Ecal_y$ of $T_{0_y}\Ecal$. 

Gromov proved in \cite{Gromov1989} that every elliptic manifold is an Oka manifold;
see \cite{ForstnericPrezelj2002} and \cite[Theorem 6.2.2]{Forstneric2017E} for the details.
Further information on elliptic manifolds can be found in \cite[Chaps.\ 5--7]{Forstneric2017E}.)
There is no known example of a nonelliptic Oka manifold.
For a Stein manifold, the properties of being Oka, elliptic, and special elliptic coincide 
(see \cite[3.2.A]{Gromov1989} or \cite[Proposition 5.6.15]{Forstneric2017E}).
Note that every complex homogeneous manifold $Y$ is special elliptic since
it admits a dominating spray of the form \eqref{eq:spray}.
However, there exist many nonhomogeneous special elliptic manifolds; for example, 
complements $\mathbb{C}^n\setminus A$ of affine algebraic subvarieties 
$A\subset \mathbb{C}^n$ of dimension $\le n-2$ (see \cite[Proposition 6.4.1]{Forstneric2017E}).

Our proof of Theorem \ref{th:Arakelian1} given in Sect.\ \ref{sec:proof1} 
only depends on the existence of a dominating spray on $Y$ defined on a trivial bundle,
so it applies to any compact special elliptic manifold. 
We note here that every such manifold is actually complex homogeneous.

%
%    EVERY COMPACT SPECIAL ELLIPTIC MANIFOLD IS HOMOGENEOUS
%
\begin{proposition}\label{prop:SEhomogeneous}
Every compact  special elliptic manifold is complex homogeneous.
\end{proposition}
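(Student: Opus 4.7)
The plan is to use the dominating spray to produce a family of holomorphic automorphisms of $Y$ and show that the orbit of any point is open. Since we are in the compact case, one-parameter perturbations of the identity will automatically be biholomorphisms, and the domination property will give local transitivity.

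Concretely, let $s \colon Y\times \mathbb{C}^n\to Y$ be a dominating spray with $s(y,0)=y$ and $Ds(y)=\partial s(y,t)/\partial t|_{t=0}\colon \mathbb{C}^n\to T_yY$ surjective for every $y\in Y$. For each $t\in\mathbb{C}^n$ define $s_t\colon Y\to Y$ by $s_t(y)=s(y,t)$; then $s_t$ is holomorphic and $s_0=\mathrm{Id}_Y$. Fix a Hermitian metric on $Y$. The first step is to show that there is an open neighborhood $V\subset\mathbb{C}^n$ of $0$ such that $s_t\in\Aut(Y)$ for all $t\in V$. Since $s$ is smooth and $Y$ compact, the map $t\mapsto s_t$ is continuous into $\Cscr^1(Y,Y)$, so for $t$ in a neighborhood of $0$ the differential $ds_t$ is everywhere invertible (hence $s_t$ is a local biholomorphism) and $s_t$ is uniformly close to the identity. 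A local biholomorphism of a compact complex manifold is surjective (its image is open and closed), and the uniform closeness to the identity forces injectivity: if $s_t(y_1)=s_t(y_2)$, then $y_1,y_2$ lie in a small metric ball on which $s_t$ is a biholomorphism onto its image, whence $y_1=y_2$.

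The second step is local transitivity. Fix $y_0\in Y$ and consider the holomorphic map $\phi\colon \mathbb{C}^n\to Y$, $\phi(t)=s(y_0,t)=s_t(y_0)$. Its differential at $0$ is the vertical derivative $Ds(y_0)$, which is surjective by the domination hypothesis. By the rank theorem $\phi(V)$ contains an open neighborhood $U$ of $y_0=\phi(0)$ in $Y$; shrinking $V$ if necessary, every point of $U$ is the image of $y_0$ under an element $s_t\in\Aut(Y)$.

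The third and final step is to pass from local to global transitivity. Let $G\subset\Aut(Y)$ be the subgroup generated by $\{s_t:t\in V\}$. For any $y\in Y$, the orbit $G\cdotp y$ contains a neighborhood of $y$ by applying Step 2 to the shifted spray based at $y$ (equivalently, by the same argument with $y_0$ replaced by any $y$, using that the domination property holds pointwise on $Y$). Hence every $G$-orbit is open in $Y$, so orbits are also closed, and connectedness of $Y$ (which we may assume, treating components separately) forces a single orbit. Thus $\Aut(Y)$ acts transitively, proving $Y$ is complex homogeneous. The only delicate point is Step 1: one must combine the compactness of $Y$ with closeness to the identity to upgrade a holomorphic local biholomorphism to a global automorphism; everything else is a direct consequence of the domination property.
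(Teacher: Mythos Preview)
Your proof is correct but takes a different route from the paper's. The paper works infinitesimally: it applies the vertical derivative $Ds(y)$ to the constant sections $e_1,\ldots,e_n$ of the trivial bundle $Y\times\mathbb{C}^n$ to obtain holomorphic vector fields $V_j(y)=Ds(y)(e_j)$ which span $T_yY$ at every point (this is the \emph{flexibility} condition of Arzhantsev et al.); since $Y$ is compact these fields are complete, so their flows are one-parameter subgroups of $\Aut(Y)$, and the spanning property then yields transitivity. You instead work directly with the finite maps $s_t$, showing they are automorphisms for small $t$ by a perturbation-of-the-identity argument. Your approach avoids vector fields and flows entirely, at the cost of the somewhat delicate Step~1 (local biholomorphism plus $\Cscr^0$-closeness to the identity forces injectivity). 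The paper's approach is a bit cleaner and links the result to the established notion of flexibility, but both reach the same conclusion that $\Aut(Y)$ acts transitively. One small point you leave implicit but the paper states: to conclude that $Y$ is complex homogeneous in the sense $Y=\Gcal/\Hcal$ with $\Gcal$ a complex Lie group, one invokes the classical fact that the holomorphic automorphism group of a compact complex manifold is a finite-dimensional complex Lie group.
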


% The converse holds as well as pointed about above.

\begin{proof}
Let $s\colon \Ecal=Y\times \mathbb{C}^n\to Y$ be a dominating spray on a compact manifold $Y$. 
Recall that $Ds$ denotes the vertical derivative 
\eqref{eq:VD} of $s$ at the zero section $Y\times \{0\}$ of $\Ecal$. 
Given a holomorphic section $\xi\colon Y\to \Ecal$, the map 
\[
	Y\ni y\ \longmapsto\ V(y) := Ds(y)(\xi(y)) \in T_yY
\]
is then a holomorphic vector field on $Y$. (The point $\xi(y)$ on the right hand side 
of the formula is considered as an element of $T_{0_y} \Ecal_y$ which we can identify with 
$\Ecal_y\cong \mathbb{C}^n$.)

Applying this argument to basis sections $\xi_1,\ldots,\xi_n$  of the trivial bundle 
$\Ecal=Y\times \mathbb{C}^n$ yields 
holomorphic vector fields $V_1,\ldots,V_n$ on $Y$, and the domination property of $s$
implies that their values at any point $y\in Y$ span the tangent space $T_y Y$. 
Thus, the manifold $Y$ is {\em flexible} in the sense of Arzhantsev et al.\ 
\cite{ArzhantsevFlennerKalimanKutzschebauchZaidenberg2013}.
Since $Y$ is compact, these vector fields are complete, and hence their flows 
are complex $1$-parameter subgroups of the holomorphic automorphism group $\Aut(Y)$.
The spanning property (flexibility) easily implies that $\Aut(Y)$ acts transitively on $Y$. 
Since the holomorphic automorphism group of a compact complex manifold is a finite
dimensional complex Lie group, it follows that $Y$ is a homogeneous space of the 
complex Lie group $\Aut(Y)$. 
\end{proof}

There exist many nonhomogeneous compact Oka manifolds, for instance,
blowups of certain compact algebraic manifolds such as projective spaces, Grassmanians,
etc.; see \cite[Propositions 6.4.5 and 6.4.6]{Forstneric2017E} 
and the papers \cite{KalimanKutzschebauchTruong2018,LarussonTruong2017}.
It is not known whether every Oka manifold is elliptic, but Proposition \ref{prop:SEhomogeneous} 
tells us that there are (compact) Oka manifolds which are not elliptic, 
or there are elliptic manifolds which are not special elliptic.

%%%%%%%%%%
%%%%%%%%%%
%%%%%%%%%%
%%%%%%%%%%   THANKS
%%%%%%%%%%
%%%%%%%%%%

\subsection*{Acknowledgements}
The author is supported by the research program P1-0291 and the grants J1-7256
and J1-9104 from ARRS, Republic of Slovenia.

I wish to thank Oliver Dragi{\v c}evi{\'c} for providing
a suitable reference concerning the properties of the Cauchy-Green operator, 
Frank Kutzschebauch and Finnur L{\'a}russon for their helpful remarks on elliptic and complex 
homogeneous manifolds, and Evgeny Poletsky for a useful discussion concerning Theorem \ref{th:Poletsky3.1} and Remark \ref{rem:Poletsky}. 
In particular, I thank F.\ Kutzschebauch for the argument that every flexible 
compact complex manifold is complex homogeneous. 
I also thank an anonymous referee for having pointed out a gap in the original
manuscript, and for the remarks which hopefully led to improved presentation.

%%%%%%%%%%
%%%%%%%%%%
%%%%%%%%%%
%%%%%%%%%%   THE BIBLIOGRAPHY
%%%%%%%%%%
%%%%%%%%%%

%{\bibliographystyle{abbrv} \bibliography{references}}

%%%%%%%%%%
%%%%%%%%%%
%%%%%%%%%%
%%%%%%%%%%   AFFILIATIONS
%%%%%%%%%%
%%%%%%%%%%

\vspace*{5mm}
\noindent Franc Forstneri\v c

\noindent Faculty of Mathematics and Physics, University of Ljubljana, Jadranska 19, SI--1000 Ljubljana, Slovenia

\noindent Institute of Mathematics, Physics and Mechanics, Jadranska 19, SI--1000 Ljubljana, Slovenia

\noindent e-mail: {\tt franc.forstneric@fmf.uni-lj.si}

\end{document}